\documentclass[11pt, amssymb]{amsart}
\usepackage{amsmath, amsthm, amsfonts, verbatim, amssymb}
\usepackage{eucal}

\hoffset=-6.5mm
\textwidth=14cm
\textheight=19.85cm
\DeclareFontFamily{OT1}{rsfs}{}
\usepackage[all]{xy}
\CompileMatrices

\DeclareFontShape{OT1}{rsfs}{n}{it}{<-> rsfs10}{}
\DeclareMathAlphabet{\mathscr}{OT1}{rsfs}{n}{it}

\newcommand{\Z}{{\mathbb Z}}

\newcommand{\Q}{{\mathbb Q}}

\newcommand{\R}{{\mathbb R}}

\newcommand{\Ga}{\mathrm{Gal}}
\newtheorem{thm}{Theorem}[section]
\newtheorem{lemma}[thm]{Lemma}
\newtheorem{prop}[thm]{Proposition}

\hoffset=-2.3cm \textwidth=16.5cm \voffset = -5mm \textheight=22cm
\begin{document}

\title[Generic elements]{Generic elements of a Zariski-dense subgroup form an open subset}

\author[G.~Prasad]{Gopal Prasad}

\author[A.S.~Rapinchuk]{Andrei S. Rapinchuk}
\vskip3mm
\maketitle

\centerline{\it Dedicated to E.B.Vinberg on his 80th birthday}
\vskip4mm

\begin{abstract}
Let $G$ be a semi-simple algebraic group over a finitely generated field $K$ of characteristic zero, and
let $\Gamma \subset G(K)$ be a finitely generated Zariski-dense subgroup. In this note we prove that the set of
$K$-\emph{generic elements} of $\Gamma$ (whose existence was established earlier in \cite{PR-generic}) is open in the profinite topology
of $\Gamma$. We then extend this result to the fields of positive characteristic, and also prove the existence of generic elements
in this case.
\end{abstract}

\section{Introduction}

This is a companion paper to \cite{PR-generic} where we first proved the existence of generic elements in an arbitrary Zariski-dense subgroup of the group of points of a semi-simple algebraic group over a finitely generated field of characteristic zero. Since then generic elements have been used in a variety of situations, in particular, to resolve some long-standing problems about isospectral locally symmetric spaces \cite{PR-IHES} (see also \cite{PR-MSRI} for a survey). This prompted us to try to understand the structure of the set of all generic elements in a given (finitely generated) Zariski-dense subgroup. The goal of this note is to establish a rather surprising fact that this set is open in the profinite topology of the subgroup -- see below for a more general/precise statement which applies to fields of any characteristic. We begin by recalling the relevant definitions.

\smallskip

Let $G$ be a semi-simple algebraic group over a field $K$. Fix a maximal $K$-torus $T$ of $G$, and let $\Phi(G , T)$ and $W(G , T)$ denote the corresponding root system and the Weyl group. The natural action of the absolute Galois group $\Ga(K^{\mathrm{sep}}/K)$, where $K^{\mathrm{sep}}$ is a fixed separable closure of $K$, on the character group $X(T)$ of $T$ gives rise to  a group homomorphism
$$
\theta_T \colon \Ga(K^{\mathrm{sep}}/K) \to \mathrm{Aut}(\Phi(G , T))
$$
that factors through the Galois group $\Ga(K_T/K)$ of the minimal splitting field $K_T$ of $T$ in $K^{\mathrm{sep}}$ inducing an \emph{injective} homomorphism
$\bar{\theta}_T \colon \Ga(K_T/K) \to \mathrm{Aut}(\Phi(G , T))$. We say that $T$ is \emph{generic} over $K$ if $\theta_T(\Ga(K^{\mathrm{sep}}/K)) \supset W(G , T)$. Furthermore, a regular semi-simple element $g \in G(K)$ is called $K$-{\it generic} if the $K$-torus $T = Z_G(g)^{\circ}$ (the connected component of the centralizer of $g$) is generic over $K$ (recall that $g \in T(K)$.) Some possible variations of this definition are discussed in \cite[9.4]{PR-MSRI}, but all the versions are equivalent for semi-simple elements without components of finite order (where the components are understood in terms of the decomposition $G = G_1 \cdots G_d$ as an almost direct  product of absolutely almost simple groups).

\smallskip

Now, fix a matrix $K$-realization $G \subset \mathrm{GL}_n$, and let $R$ be a subring of $K$. Quite often, by the \emph{congruence topology} on the group $G(R) := G(K) \cap {\rm{GL}}_n(R)$ one understands the topology having as a fundamental system of neighborhoods of the identity the family of congruence subgroups $$G(R , \mathfrak{a}) =: G(R) \cap {\rm{GL}}_n(R , \mathfrak{a}), \ \ \text{where} \ \  {\rm{GL}}_n(R , \mathfrak{a}) = \{ X \in {\rm{GL}}_n(R) \: \vert \: X \equiv I_n(\mathrm{mod}\: \mathfrak{a}) \},$$ in the obvious notation, for \emph{all} nonzero ideals $\mathfrak{a}$ of $R$. However, in this note we reserve this term for the (generally) weaker topology defined by the congruence subgroup $G(R , \mathfrak{a})$ where $\mathfrak{a} \subset R$ is an ideal of \emph{finite index}, that is the quotient $R/\mathfrak{a}$ is finite. Any such congruence subgroup is obviously a normal subgroup of finite index in $G(R)$, and consequently the topology induced by the congruence topology in this sense on any subgroup $\Gamma \subset G(R)$ is generally coarser than the \emph{profinite topology} of $\Gamma$
defined by \emph{all} normal subgroups $N \subset \Gamma$ of finite index.

\medskip

We can now formulate the main result.

\smallskip

\noindent {\bf Theorem 1.} {\it Let $G$ be a semi-simple algebraic group defined over a finitely generated field $K$ of any characteristic, $R \subset K$ be a finitely generated subring and $\Gamma \subset G(R)$ be a subgroup which is Zariski-dense in $G$. Then the set $\Delta(\Gamma , K)$ of regular semi-simple $K$-generic elements is open in $\Gamma$ in the congruence topology defined by ideals $\mathfrak{a} \subset R$ of finite index. In particular, $\Delta(\Gamma , K)$ is open in $\Gamma$ in the profinite topology.}

\medskip

\noindent {\bf Corollary.} {\it Let $G$ be a semi-simple algebraic group defined over a finitely generated field $K$, and let $\Gamma \subset G(K)$ be a finitely generated Zariski-dense subgroup. Then the set $\Delta(\Gamma , K)$ is open in $\Gamma$ in the profinite topology.}

\medskip

The proof of Theorem 1 requires a suitable generalization of Chebotarev's Density Theorem, and in \S\ref{S:Cheb} we give this generalization for fields of characteristic zero  - see Proposition \ref{P:1}. Then in \S\ref{S:Proof} we combine this proposition with some techniques developed earlier in \cite{PR-generic} to prove Theorem 1 in characteristic zero. The argument easily extends to positive characteristic provided that one can generalize Chebotarev's Theorem to this case; we establish the required generalization in  \S\ref{S:posit}.

While Theorem 1 gives the \emph{openness} of the set $\Delta(\Gamma , K)$ of regular semi-simple $K$-generic elements in the profinite topology of a finitely generated Zariski-dense subgroup $\Gamma \subset G(K)$, its proof does not automatically yield the \emph{non-emptiness} of $\Delta(\Gamma , K)$. As we already pointed out, the existence of generic elements was first established in \cite{PR-generic} over fields of characteristic zero - for the reader's convenience we summarize this argument in Remark 3.2. One of its essential components is a form of weak approximation that asserts that for a finite set $V$ of discrete valuations of $K$ that is constructed in the proof, the closure of the image of the diagonal embedding $\Gamma \hookrightarrow G_V = \prod_{v \in V} G(K_v)$ is open. In characteristic zero $V$ is selected so that the completions $K_v$ for $v \in V$ are the $p$-adic fields $\Q_p$ for pairwise distinct primes $p$, and then the required openness is an easy consequence of the Zariski-density of $\Gamma$. In characteristic $p > 0$, this property becomes significantly more delicate. If $p > 3$ then the argument from  \cite{PR-generic} can still be made to work using the strong approximation theorem of B.~Weisfeiler \cite{Weis}. Additional complications in characteristic 2 and 3 come from so-called {\it exceptional isogenies}, whose existence render the openness statement invalid even after passing to the universal cover. For this reason, the existence of generic elements over fields of positive characteristic remained unproven until recently. J.\,Schwartz in his dissertation \cite{Schwartz} used the approximation results of R.~Pink \cite{Pink-CS}, \cite{Pink-SA} to prove the existence of generic elements, in particular, over global fields of all characteristics. Here we will establish the following general result for all absolutely almost simple groups.

\smallskip

\noindent {\bf Theorem 2.} {\it Let $G$ be a connected absolutely almost simple algebraic group over a finitely generated field $K$ (of any characteristic), and let $\Gamma'$ be a Zariski-dense subsemigroup of $G(K)$ that contains an element of infinite order.  Then $\Gamma'$ contains a regular semisimple element $\gamma' \in \Gamma'$ of infinite order that is $K$-generic.}

\smallskip

We will prove Theorem 2 in \S\ref{S:Exist}. The argument heavily relies on the results of Pink \cite{Pink-CS} which we will briefly review below for the reader's convenience.
\smallskip

\noindent{\bf Notation}.  Given a variety $X$ defined over a field $K$, and a field extension $L$ of $K$, we will denote by $X_L$, or $(X)_L$, the $L$-variety obtained from $X$ by base change $K\hookrightarrow L$. If $X$ is an algebraic  $K$-group, by $L$-torus of $X$, we will mean a $L$-torus of $X_L$.

\medskip

\section{A generalization of Chebotarev's Density Theorem}\label{S:Cheb}

\begin{prop}\label{P:1}
Let $R$ be a finitely generated subring of a finitely generated field $K$ of characteristic zero, and let $L$ be a finite Galois extension of $K$ with Galois group $\mathscr{G} = \Ga(L/K)$.   Fix a conjugacy class $\mathscr{C}$ of $\mathscr{G}$. Then there exists an infinite set of primes $\Pi$ such that for each $p \in \Pi$ there exists an embedding $\iota_p \colon K \hookrightarrow \Q_p$ with the following properties:

\smallskip

{\rm (1)} $\iota_p(R) \subset \Z_p$,

\smallskip

{\rm (2)} \parbox[t]{14.5cm}{if $v$ denotes the discrete valuation of $K$ obtained by pulling back the $p$-adic valuation of $\Q_p$ (so that $K_v = \Q_p$), then any extension $w \vert v$ to $L$ is unramified and the Frobenius automorphism of $L_w/K_v$ belongs to $\mathscr{C}$.}
\end{prop}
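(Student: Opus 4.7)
The plan is to spread $L/K$ out over $\mathrm{Spec}(\Z)$ to a finite étale $\mathscr{G}$-Galois cover of smooth affine $\Z$-schemes, apply a relative Chebotarev density theorem to find closed $\F_p$-rational points with Frobenius class $\mathscr{C}$ for infinitely many $p$, and then lift each such point to a $\Z_p$-point using smoothness together with the uncountable transcendence degree of $\Q_p$ over $\Q$. To set up, enlarge $R$ inside $K$ (harmless, since any embedding of a larger finitely generated ring into $\Z_p$ restricts to one on the original $R$): by inverting finitely many elements one may arrange that $K = \mathrm{Frac}(R)$, that $R$ is integrally closed in $K$, that $X := \mathrm{Spec}(R)$ is smooth over $\mathrm{Spec}(\Z)$ of relative dimension $n := \mathrm{tr.deg}_{\Q} K$, and, after also inverting the discriminant of $L/K$, that the integral closure $S$ of $R$ in $L$ yields a finite étale $\mathscr{G}$-Galois cover $Y := \mathrm{Spec}(S) \to X$.

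The core step is the Chebotarev assertion: for infinitely many primes $p$, there exists a closed point $x \in X$ with residue field $\F_p$ whose Frobenius class in $\mathscr{G}$ equals $\mathscr{C}$. For all but finitely many $p$ the reduction $X_{\F_p}$ is smooth and geometrically integral of dimension $n$, and $Y_{\F_p} \to X_{\F_p}$ remains a finite étale $\mathscr{G}$-Galois cover with $Y_{\F_p}$ geometrically connected, so that the arithmetic fundamental group of $X_{\F_p}$ surjects onto $\mathscr{G}$. Geometric Chebotarev over $\F_p$ then produces closed points of $X_{\F_p}$ with Frobenius class $\mathscr{C}$ of density $|\mathscr{C}|/|\mathscr{G}|$; combined with the Lang--Weil bound $|X_{\F_p}(\F_p)| = p^n + O(p^{n-1/2})$, this forces the existence of an $\F_p$-rational point $x$ with $\mathrm{Frob}_x \in \mathscr{C}$ for every sufficiently large such $p$.

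Given such a pair $(p,x)$, smoothness of $X$ over $\mathrm{Spec}(\Z)$ at $x$ identifies $\widehat{\mathcal{O}}_{X,x} \cong \Z_p[[T_1,\ldots,T_n]]$, the $T_i$ coming from elements $t_i$ of the local ring at $x$ which, together with $p$, form a regular system of parameters. Choose $a_1,\ldots,a_n \in p\Z_p$ algebraically independent over $\Q$ — possible because $\Q_p$ has uncountable transcendence degree over $\Q$ — and send $T_i \mapsto a_i$. The resulting ring map $R \to \Z_p$ has trivial kernel, since algebraic independence of $a_1,\ldots,a_n$ over $\Q$ rules out any nontrivial algebraic relation over $\Q$, and thus extends to an embedding $\iota_p : K \hookrightarrow \Q_p$ with $\iota_p(R) \subset \Z_p$; density of $\iota_p(\Q)$ in $\Q_p$ then forces $K_v = \Q_p$. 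The places $w \vert v$ on $L$ correspond bijectively to the factors of the finite étale $\Z_p$-algebra $S \otimes_R \Z_p = \prod_{w \vert v} \mathcal{O}_w$; each $\mathcal{O}_w/\Z_p$ is an unramified extension, and its Frobenius agrees with $\mathrm{Frob}_x \in \mathscr{C}$ by functoriality of Frobenius under étale base change.

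The main obstacle lies in the arithmetic Chebotarev step: one must verify that the Galois group $\mathscr{G}$ is preserved under reduction modulo almost all $p$ and secure $\F_p$-rational (not merely $\F_{p^f}$-rational) points with the prescribed Frobenius class for infinitely many $p$. A self-contained alternative to invoking arithmetic Chebotarev directly is to apply Hilbert's irreducibility theorem to specialize the transcendence basis to rational values, thereby reducing to the classical Chebotarev density theorem over a number field.
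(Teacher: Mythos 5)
Your main argument goes through the route that the paper's Remark~2.2 explicitly identifies and deliberately avoids: generalize Chebotarev to schemes of finite type over~$\Z$, rather than reducing to the classical number-field statement. The paper's own proof uses Noether normalization, a Galois closure over $k=\Q(t_1,\ldots,t_r)$, Hilbert's irreducibility theorem to specialize to an irreducible $f_0\in\Q[x]$, the classical Chebotarev theorem for the number field $E_0=\Q(\alpha_0)$, and then a careful perturbation ($a^0_i\mapsto a^1_i$ algebraically independent, followed by Hensel's lemma) to produce the embedding $\iota_p$ with the correct Frobenius. Your final paragraph gestures at precisely this alternative, so you have both approaches in view.

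However, your primary sketch has a genuine gap. You assert that for all but finitely many $p$, the fiber $Y_{\F_p}$ is \emph{geometrically} connected, so that geometric Chebotarev over $\F_p$ plus Lang--Weil produce $\F_p$-rational points with Frobenius class $\mathscr{C}$ of density $|\mathscr{C}|/|\mathscr{G}|$. This fails whenever $L\cap\overline{\Q}\neq\Q$: in that case $Y_{\overline{\Q}}$ is disconnected, so $Y_{\overline{\F_p}}$ is disconnected for almost all $p$, and the geometric Galois group $\mathscr{G}^{\mathrm{geom}}$ is a proper normal subgroup of $\mathscr{G}$. Then the Frobenius classes of $\F_p$-rational points of $X_{\F_p}$ all lie in a \emph{single} coset of $\mathscr{G}^{\mathrm{geom}}$ (determined by how $p$ behaves in the constant subextension $L\cap\overline{\Q}$), so for a fixed $p$ you cannot reach an arbitrary $\mathscr{C}$. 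To see that $\mathscr{C}$ is reached for infinitely many $p$, one must combine the geometric input with an arithmetic input (Chebotarev for the number field $L\cap\overline{\Q}$ over $\Q$), which is exactly what the full Chebotarev theorem for schemes over $\Z$ encodes; your sketch omits this. The concrete counterexample is $K=\Q(t)$, $L=\Q(\sqrt{2},t)$: here $Y_{\overline{\F_p}}$ always has two components, $\mathscr{G}^{\mathrm{geom}}$ is trivial, and for a given $p$ every $\F_p$-point of $X$ has the same Frobenius class.

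A secondary issue: in the lifting step, the injectivity of $R\to\Z_p$ does not follow merely from the algebraic independence of $a_1,\ldots,a_n$ over $\Q$, since the power series expressing elements of $R$ in the local coordinates have coefficients in $\Z_p$, not $\Q$. The argument does work, but requires the observation that (after localizing $R$) the parameters $t_1,\ldots,t_n$ give a quasi-finite map $\mathrm{Spec}(R)\to\A^n_{\Z}$ near $x$, so that $K$ is algebraic over $\Q(t_1,\ldots,t_n)$; then any element of the kernel of $R\to\Z_p$ would give a nontrivial algebraic relation over $\Q$ among the $a_i$. You should make this explicit. The paper's route avoids both difficulties: Hilbert irreducibility keeps everything within the classical Chebotarev framework, and the perturbation-plus-Hensel argument controls the embedding cleanly.
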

\begin{proof}
In this argument, for a (monic) polynomial $f(x) \in A[x]$ over an integral domain $A$ we let $\delta_f \in A$ denote its discriminant.  Without loss of generality, we may assume that $K$ is the field of fractions of $R$. Using Noether's Normalization Theorem, we can find algebraically independent $t_1, \ldots , t_r \in \Q R$ so that $\Q R$ is integral over $\Q[t_1, \ldots , t_r]$. In fact, we may further assume $t_1, \ldots , t_r \in R$ and then pick a finite set of primes $S$ such that for  $p_S := \prod_{p\in S} p$, the localization $\Z_S= \Z[p^{-1}_S]$ of $\Z$ away from $S$, we have that $\Z_S R$ is integral over $\Z_S[t_1, \ldots , t_r]$. Now, set $k = \Q(t_1, \ldots t_r)$ and let $E$ denote the Galois closure of $L$ over $k$ with Galois group $\mathscr{H} = \Ga(E/k)$. We can pick a primitive element $\alpha$ for $E$ over $k$ whose minimal polynomial $f$ is of the form
$$
f(x) = x^n + z_{n-1}(t_1, \ldots , t_r)x^{n-1} + \cdots + z_0(t_1, \ldots , t_r)
$$
with $z_i(t_1, \ldots , t_r) \in \Z[t_1, \ldots , t_r]$. Now, fix $\sigma \in \mathscr{C}$, and let $\widetilde{\sigma} \in \mathscr{H}$ be an automorphism that acts trivially on $K$ and restricts to $\sigma$ on $L$. Pick polynomials $g_0, \ldots , g_{n-1}$ and $h \in \Z_S[t_1, \ldots , t_r]$ so that
\begin{equation}\label{E:1}
\widetilde{\sigma}(\alpha) = \sum_{j = 0}^{n-1} c_j \alpha^j \ \ \text{where} \ \ c_j = g_j/h.
\end{equation}
Using Hilbert's Irreducibility Theorem (cf.\,\cite[Ch.\,3]{Se-Inv} and references therein), we can find $(a^0_1, \ldots , a^0_r) \in \Q^r$ such that $h(a^0_1, \ldots , a^0_r) \neq 0$ and the polynomial
$$
f_0(x) := x^n + z_{n-1}(a^0_1, \ldots , a^0_r)x^{n-1} + \cdots + z_0(a^0_1, \ldots , a^0_r) \in \Q[x]
$$
is irreducible. Then, if we write the discriminant $\delta_f$ as a polynomial in $t_1, \ldots , t_r$, we automatically have $\delta_f(a^0_1, \ldots , a^0_r) \neq 0$.
Let $\mathfrak{m}_0$ be the maximal ideal of $\Q[t_1, \ldots , t_r]$ generated by $t_1 - a^0_1, \ldots , t_r - a^0_r$, let $A$ be the corresponding local ring
$\Q[t_1, \ldots , t_r]_{\mathfrak{m}_0}$ with the maximal ideal $\mathfrak{m} = \mathfrak{m}_0A$, and let $B$ be the integral closure of $A$ in $E$. By construction, $\alpha \in B$ and the discriminant of the basis $1, \alpha, \ldots , \alpha^{n-1}$ is a unit in $A$. So, a standard argument using traces  shows that in fact $B = A[\alpha]$. The specialization homomorphism $\psi \colon A \to \Q$ with kernel $\mathfrak{m}$ that sends $t_i$ to $a^0_i$ for $i = 1, \ldots , r$, extends to a homomorphism $\widetilde{\psi} \colon B \to \overline{\Q}$ into the algebraic closure of $\Q$ (see \cite[Ch.\,VII, Proposition 3.1]{Lang}); note that $\mathfrak{M} := \mathrm{Ker}\: \widetilde{\psi}$ is a maximal ideal of $B$ lying above $\mathfrak{m}$.  Let $E_0 = \widetilde{\psi}(B) \simeq B/\mathfrak{M}$; clearly, $E_0 = \Q(\alpha_0)$ where $\alpha_0 = \widetilde{\psi}(\alpha)$ is a root of $f_0$. Since $f_0$ is irreducible, we have
\begin{equation}\label{E:deg}
[E_0 : \Q] = n = [E : k].
\end{equation}
Furthermore, by \cite[Ch.\,VII, Proposition 2.5]{Lang}, the extension $E_0/\Q$ is normal and if we let $\mathscr{H}(\mathfrak{M})$ denote the decomposition subgroup of $\mathfrak{M}$ in $\mathscr{H}$, then the reduction of automorphism modulo $\mathfrak{M}$ yields a \emph{surjective} homomorphism
$$
\rho \colon \mathscr{H}(\mathfrak{M}) \to \Ga(E_0/\Q) =: \mathscr{H}_0.
$$
Since according to (\ref{E:deg}) we have $| \mathscr{H} | = | \mathscr{H}_0 |$, we conclude that $\mathscr{H}(\mathfrak{M}) = \mathscr{H}$, and $\rho \colon \mathscr{H} \to \mathscr{H}_0$ is actually an isomorphism. Let $\widetilde{\sigma}_0 := \rho(\widetilde{\sigma}) \in \mathscr{H}_0$.

Enlarging $S$ if necessary, we may assume that $a^0_1, \ldots , a^0_r \in \Z_S$ and  $\delta_f(a^0_1, \ldots , a^0_r)$ and $h(a^0_1, \ldots , a^0_r)$ are $p$-adic units for all primes $p \notin S$. Let $\Pi$ be the set of all primes $p \notin S$ such that the extension $E_0/\Q$ is unramified at $p$ and for a suitable extension $u$ of the $p$-adic place to $E_0$, the Frobenius automorphism $\mathrm{Fr}(u \vert p)$ of $(E_0)_u/\Q_p$ is $\widetilde{\sigma}_0$. By Chebotarev's Density Theorem (cf.\,\cite[Ch.\,VII, 2.4]{ANT}), the set $\Pi$ is infinite, and we will show that it is as required.

Let $p \in \Pi$. By our construction, we can then pick an extension of the $p$-adic valuation $u$ to $E_0$ such that $(E_0)_u/\Q_p$ is unramified with the Frobenius automorphism $\mathrm{Fr}(u \vert p)$ equal to $\widetilde{\sigma}_0$. Then $u$ corresponds to an embedding $\varepsilon \colon E_0 \hookrightarrow \overline{\Q_p}$ into the algebraic closure of $\Q_p$, and we set $\mathcal{E} = \Q_p(\varepsilon(\alpha_0))$ (clearly, $\mathcal{E}$ is naturally identified with the completion $(E_0)_u$) and let $\varphi$ be the Frobenius automorphism of $\mathcal{E}/\Q_p$. Let $\mathcal{O}$ (resp., $\mathfrak{p}$) be the valuation ring (resp., valuation ideal) in $\mathcal{E}$.

Now,  pick $a^1_i \in a^0_i + p\Z_p$ for $i = 1, \ldots , r$ so that $a^1_1, \ldots , a^1_r$ are algebraically independent over $\Q$, and then let
$$
f_1(x) := x^n + z_{n-1}(a^1_1, \ldots , a^1_r)x^{n-1} + \cdots + z_0(a^1_1, \ldots , a^1_r) \in \Z_p[x].
$$
Note that $f_1(x) \equiv f_0(x) (\mathrm{mod}\: p)$, so $\delta_{f_1} \equiv \delta_{f_0} (\mathrm{mod}\: p)$ and therefore $\delta_{f_1} \not\equiv 0(\mathrm{mod}\: p)$.
By construction $f_0(\varepsilon(\alpha_0)) = 0$, and consequently $f_1(\varepsilon(\alpha_0)) \equiv 0 (\mathrm{mod}\: \mathfrak{p})$. Since $\delta_{f_1} \not\equiv 0(\mathrm{mod}\: p)$, we have $f'_1(\varepsilon(\alpha_0)) \not\equiv 0(\mathrm{mod}\: \mathfrak{p})$, and therefore by Hensel's Lemma, there exists  a root $\alpha_1 \in \mathcal{O}$ of $f_1$ such that $\alpha_1 \equiv \varepsilon(\alpha_0) (\mathrm{mod}\: \mathfrak{p})$. We note that since $\delta_{f_0} \not\equiv 0(\mathrm{mod}\: \mathfrak{p})$, we have $\mathcal{O} = \Z_p[\varepsilon(\alpha_0)]$ (cf.\,\cite[Ch.\,I, Theorem 7.5]{Jan}), and therefore the residue field of $\mathcal{E}$ is generated over the prime subfield $\mathbf{F}_p$ by the image $\overline{\varepsilon(\alpha_0)} = \overline{\alpha_1}$. Since $\mathcal{E}/\Q_p$ is unramified, it follows that $\mathcal{E} = \Q_p(\alpha_1)$.  Since $a^1_1, \ldots , a^1_r$ are algebraically independent over $\Q$, there is an embedding $k \hookrightarrow \Q_p$ sending $t_i$ to $a^1_i$ for $i = 1, \ldots , r$. This embedding extends to a (dense) embedding $\iota \colon E \hookrightarrow \mathcal{E}$ sending $\alpha$ to $\alpha_1$.

\medskip

\noindent {\bf Claim.} {\it For $a \in E$, we have $\iota(\widetilde{\sigma}(a)) = \varphi(\iota(a))$.}

\medskip

Indeed, it is enough to prove this for $a = \alpha$. It follows from (\ref{E:1}) that
\begin{equation}\label{E:110}
\iota(\widetilde{\sigma}(\alpha)) = \sum_{j = 0}^{n-1} c_j(a^1_1, \ldots , a^1_r) (\alpha_1)^j.
\end{equation}
Clearly, we have $h(a^1_1, \ldots , a^1_r) \equiv h(a^0_1, \ldots , a^0_r)(\mathrm{mod}\: p)$, and in particular, $h(a^1_1, \ldots , a^1_r)$ is a $p$-adic unit. It follows that for all $j = 0, \ldots , n-1$, the elements $c_j(a^0_1, \ldots , a^0_r)$ and $c_j(a^1_1, \ldots , a^1_r)$ both lie in $\Z_p$ and are congruent modulo $p$. Applying to (\ref{E:1}) the specialization map $\widetilde{\psi}$, we obtain that
\begin{equation}\label{E:111}
\widetilde{\sigma}_0(\alpha_0) = \sum_{j = 0}^{n-1} c_j(a^0_1, \ldots , a^0_r) (\alpha_0)^j.
\end{equation}
Since $\alpha_1 \equiv \varepsilon(\alpha_0)(\mathrm{mod}\: \mathfrak{p})$, we have
$$
\varphi(\iota(\alpha)) = \varphi(\alpha_1) \equiv \varphi(\varepsilon(\alpha_0)) (\mathrm{mod}\: \mathfrak{p}).
$$
On the other hand, since $\mathrm{Fr}(u \vert p) = \widetilde{\sigma}_0$, we have $\varphi(\varepsilon(\alpha_0)) = \varepsilon(\widetilde{\sigma}_0(\alpha_0))$. Combining this with (\ref{E:110}), (\ref{E:111}) and the fact that $\alpha_1 \equiv \varepsilon(\alpha_0) (\mathrm{mod}\: \mathfrak{p})$, we conclude that
\begin{equation}\label{E:2}
\varphi(\iota(\alpha))  \equiv \iota(\widetilde{\sigma}(\alpha)) (\mathrm{mod}\: \mathfrak{p}).
\end{equation}
Now, both $\iota(\widetilde{\sigma}(\alpha))$ and $\varphi(\iota(\alpha))$ are roots of $f_1(x)$. But since $\delta_{f_1} \not\equiv 0$, the polynomial $f_1$ has no multiple roots modulo $\mathfrak{p}$, so (\ref{E:2}) implies that $\iota(\widetilde{\sigma}(\alpha)) = \varphi(\iota(\alpha))$ as required.

\medskip

By construction, $\widetilde{\sigma}$ acts on $K$ trivially. So, it follows from the above claim that $\iota(K) \subset \mathcal{E}^{\varphi} = \Q_p$ because $\varphi$ generates the Galois group $\Ga(\mathcal{E}/\Q_p)$. Since $\iota(\Z_S[t_1, \ldots , t_r]) \subset \Z_p$ and $\Z_S R$ is integral over $\Z_S[t_1, \ldots , t_r]$, we conclude that $\iota(R) \subset \Z_p$, so for $\iota_p$ we can take the restriction of $\iota$ to $K$. It follows from our construction that if we let $w_0$ denote the pullback to $L$ of the extension of the $p$-adic valuation to $\mathcal{E}$ so that the completion $L_{w_0}$ can be identified with the compositum $\iota(L)\Q_p$ inside $\mathcal{E}$, then $L_{w_0}/K_v$ is unramified and with the Frobenius automorphism  $\mathrm{Fr}(w_0 \vert v) = \sigma$. Since $L/K$ is a Galois extension, we conclude that \emph{any} extension $w \vert v$ is unramified and the Frobenius $\mathrm{Fr}(w \vert v)$ belongs to the conjugacy class $\mathscr{C}$.\end{proof}

\noindent {\bf Remark 2.2.} Proposition \ref{P:1} can be derived from a generalization of  Chebotarev's theorem to the case of schemes of finite type over $\Z$ (see \cite[Ch.\,9]{Se-NX(p)} and references therein). The above argument, however, shows a way to bypass this (rather technical) generalization and obtain the required proposition using only the classical form of Chebotarev's theorem.

\vskip5mm

\section{Proof of Theorem 1}\label{S:Proof}

\begin{lemma}\label{L:1}
Let $G$ be a semi-simple algebraic group over a field $\mathcal{K}$ which is complete with respect to a discrete valuation $v$. Fix a maximal $\mathcal{K}$-torus $T$ of $G$, let $T_{\mathrm{reg}}$ denote the Zariski-open set of regular elements, and consider the regular map
$$
\psi \colon G \times T_{\mathrm{reg}} \longrightarrow G, \ \ (g , t) \mapsto gtg^{-1}.
$$
Then the map  $\psi_{\mathcal{K}} \colon G(\mathcal{K}) \times T_{\mathrm{reg}}(\mathcal{K}) \to G(\mathcal{K})$ induced by $\psi$ on $\mathcal{K}$-points is open for the topology defined by $v$.
\end{lemma}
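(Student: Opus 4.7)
\medskip

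\noindent\textbf{Plan for the proof of Lemma \ref{L:1}.}

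The openness of $\psi_{\mathcal{K}}$ is a formal consequence of the submersion theorem for morphisms of smooth varieties over a complete non-archimedean field: once the differential of $\psi$ is shown to be surjective at every $\mathcal{K}$-point of the source $G \times T_{\mathrm{reg}}$, the induced map on $\mathcal{K}$-points is automatically open by the implicit function theorem. Thus the argument reduces to a tangent-space computation.

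First I would compute $d\psi_{(g_0,t_0)}$ at an arbitrary point $(g_0,t_0) \in G(\mathcal{K}) \times T_{\mathrm{reg}}(\mathcal{K})$. After trivialising the tangent spaces at $g_0$, $t_0$ and $g_0 t_0 g_0^{-1}$ via left translation (so $T_{g_0}G \simeq \mathfrak{g}$, $T_{t_0}T \simeq \mathfrak{t}$, and $T_{g_0 t_0 g_0^{-1}}G \simeq \mathfrak{g}$), a direct first-order expansion of
$$\psi(g_0 \exp(sX),\; t_0 \exp(sY)) \;=\; g_0 \exp(sX)\, t_0\, \exp(sY) \exp(-sX)\, g_0^{-1}$$
in the dual numbers $\mathcal{K}[s]/(s^2)$ yields
$$d\psi_{(g_0,t_0)}(X, Y) \;=\; (\Ad(t_0^{-1}) - \id)(X) + Y, \qquad X \in \mathfrak{g},\ Y \in \mathfrak{t}.$$

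Next I would verify surjectivity. Since $t_0 \in T_{\mathrm{reg}}$, one has $\alpha(t_0) \neq 1$ for every root $\alpha \in \Phi(G,T)$. Using the root-space decomposition $\mathfrak{g} = \mathfrak{t} \oplus \bigoplus_{\alpha} \mathfrak{g}_\alpha$ over a separable closure, the operator $\Ad(t_0^{-1}) - \id$ vanishes on $\mathfrak{t}$ and acts invertibly on each $\mathfrak{g}_\alpha$ as multiplication by $\alpha(t_0)^{-1} - 1 \neq 0$. Its image is therefore the linear complement $\bigoplus_{\alpha} \mathfrak{g}_\alpha$ of $\mathfrak{t}$ in $\mathfrak{g}$, so adding $Y \in \mathfrak{t}$ exhausts all of $\mathfrak{g}$. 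Hence $d\psi_{(g_0,t_0)}$ is surjective at every point of $G(\mathcal{K}) \times T_{\mathrm{reg}}(\mathcal{K})$.

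Finally, since the source and target of $\psi$ are smooth $\mathcal{K}$-varieties and the differential of $\psi$ is everywhere surjective on $G \times T_{\mathrm{reg}}$, the morphism is smooth there. The implicit function theorem for analytic maps over a complete discretely valued field (see, e.g., Bourbaki, \emph{Vari\'et\'es diff\'erentielles et analytiques}, or Serre, \emph{Lie Algebras and Lie Groups}, Part~II) then implies that $\psi_{\mathcal{K}}$ is locally open at every point, and hence open. The only substantive step is the surjectivity of the differential; the rest is formal and works uniformly in any characteristic, matching the generality in which the lemma is stated, so there is no serious obstacle.
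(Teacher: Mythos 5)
Your proposal is correct and follows exactly the route the paper takes: the paper simply says ``a direct computation shows that the differential $d_{(g,t)}\psi$ is surjective'' and then invokes the inverse function theorem from \cite[\S 3.1]{PlR} and \cite[Part II, Ch.\,III]{Se-Lie}, and you have supplied precisely that computation (the formula $(\Ad(t_0^{-1})-\id)(X)+Y$ together with the root-space argument using $\alpha(t_0)\neq 1$). Nothing is missing, and your remark that the argument is characteristic-free is exactly why the paper places no restriction on $\mathrm{char}\,\mathcal{K}$.
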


Indeed, a direct computation shows that the differential $d_{(g , t)} \psi$ is surjective for any $(g , t) \in G \times T_{\mathrm{reg}}$. So, our assertion follows from the Inverse Function Theorem (cf. \cite[\S 3.1]{PlR}, \cite[Part II, Ch.\,III]{Se-Lie}).

\smallskip

We will now recall one construction introduced in \cite{PR-generic}. Let $G$ be a semi-simple $K$-group, and let $T_1$ and $T_2$ be two maximal tori of $G$ defined over some extension $F/K$. Then there exists $g \in G(\overline{F})$ such that $T_2 = \iota_g(T_1)$, where $\iota_g(x) = gxg^{-1}$. Then $\iota_g$ induces an isomorphism between the Weyl groups $W(G , T_1)$ and $W(G , T_2)$. A different choice of $g$ will change this isomorphism by an inner automorphism of the Weyl group, implying that there is a \emph{canonical bijection} between the sets $[W(G , T_1)]$ and $[W(G , T_2)]$ of conjugacy classes in the respective groups; we will denote this bijection by $\iota_{T_1 , T_2}$. Moreover, if we let $\iota^*_g \colon {\rm{X}}(T_2) \to {\rm{X}}(T_1)$ denote the corresponding isomorphism of the character groups, then $\iota^*_g$ takes $\Phi(G , T_2)$ to $\Phi(G , T_1)$, and if we identify $\mathrm{Aut}(\Phi(G , T_1))$ with $\mathrm{Aut}(\Phi(G , T_2))$ using $\iota^{\natural}_g \colon \alpha \mapsto (\iota^*_g)^{-1} \circ \alpha \circ \iota^*_g$, for $\alpha\in\mathrm{Aut}(\Phi(G , T_1))$,  then the following holds:
{\it if $g \in G(E)$, where $E$ is an extension of $F$, then for any $\sigma \in \Ga(\overline{E}/E)$ we have
\begin{equation}\label{E:KKK}
  \iota^{\natural}_g(\theta_{T_1}(\sigma)) = \theta_{T_2}(\sigma)
\end{equation}
in the above notations.}

\medskip

{\it Proof of the theorem.} It is enough to show that the set $\Delta(\Gamma , K)$ of regular semi-simple $K$-generic elements in $\Gamma = G(R)$ is open in $\Gamma$ in the congruence topology defined by ideals $\mathfrak{a} \subset R$ of finite index. Let $g_0 \in \Delta(\Gamma , K)$. Then for the maximal $K$-torus $T_0 = Z(g_0)^{\circ}$ we have the inclusion
$$
\bar{\theta}_{T_0}(\Ga(K_{T_0}/K)) \supset W(G , T_0)
$$
in the above notations. Let $w_1, \ldots , w_r$ be a set representative of all nontrivial conjugacy classes of $W(G , T_0)$,  let $\widetilde{\sigma}_i \in \Ga(\overline{K}/K)$ be such that that $\theta_{T_0}(\widetilde{\sigma}_i) = w_i$ for $i = 1, \ldots , r$, and let $\sigma_i$ be the image of $\widetilde{\sigma}_i$
in $\Ga(K_{T_0}/K)$ (so that $\bar{\theta}_{T_0}(\sigma_i) = w_i$).  Applying Proposition \ref{P:1} to $L = K_{T_0}$, we can find $r$ distinct primes $p_1, \ldots , p_r$ such that for each $i \in \{1, \ldots , r\}$ there is an embedding $\iota_{p_i} \colon K \hookrightarrow \Q_{p_i}$ such that $\iota_{p_i}(R) \subset \Z_{p_i}$ and for a suitable extension $u_i \vert v_{p_i}$, where $v_{p_i}$ is the pullback of the $p_i$-adic valuation on $\Q_{p_i}$, the extension $L_{u_i}$ of $K_{v_{p_i}} = \Q_{p_i}$ is unramified with the Frobenius automorphism $\sigma_i$. According to Lemma \ref{L:1}, for each $i \in \{1, \ldots , r\}$, the set
$$
\mathcal{U}_{p_i} := \psi_{\Q_{p_i}}(G(\Q_{p_i}) \times (T_0)_{\mathrm{reg}}(\Q_{p_i})) \ \ \text{where} \ \ \psi_{\Q_{p_i}}(g , t) = gtg^{-1}.
$$
is open and obviously contains $g_0$. So, we can find $\ell_i \geqslant 1$ such that the coset $g_0G(\Z_{p_i} , p_i^{\ell_i}\Z_{p_i})$ of the corresponding congruence subgroup is contained in $\mathcal{U}_{p_i}$. Set
$$
\mathfrak{a} = \bigcap_{i = 1}^r (R \cap \iota_{p_i}^{-1}(p_i^{\ell_i}\Z_{p_i})).
$$
Clearly, $\mathfrak{a}$ is an ideal of $R$ having finite index, and to conclude the proof we will show that $g_0\Gamma(\mathfrak{a}) \subset \Delta(\Gamma , K)$.  Let $g \in g_0 \Gamma(\mathfrak{a})$. Then by construction $g \in \mathcal{U}_{p_i}$ for all $i = 1, \ldots , r$, and in particular $g$ is a regular semi-simple element. Furthermore, if $T = Z_{G}(g)^{\circ}$, then for each $i = 1, \ldots , r$ there exists $g_i \in G(\Q_{p_i})$ such that $\iota_{g_i}(T_0) = T$. It follows from (\ref{E:KKK}) that $\iota^{\natural}_{g_i}(\theta_{T_0}(\widetilde{\sigma}_i)) = \theta_{T}(\widetilde{\sigma}_i)$, and therefore the conjugacy class $\iota_{T_0 , T}([w_i])$ of $W(G , T)$ intersects $\theta_T(\Ga(\overline{\Q_{p_i}}/\Q_{p_i})) \subset \theta_T(\Ga(\overline{K}/K))$. This being true for each $i = 1, \ldots , r$, we conclude that the subgroup $\theta_T(\Ga(\overline{K}/K)) \cap W(G , T)$ intersects every conjugacy class of $W(G , T)$. Applying an elementary fact (Jordan's theorem) from group theory, we obtain that $\theta_T(\Ga(\overline{K}/K)) \supset W(G , T)$, as required. \hfill $\Box$

\vskip1mm

\noindent {\bf Remark 3.2.} We would like to indicate that the above argument is parallel to the argument developed in \cite{PR-generic} in order to prove the existence of $K$-generic elements in any Zariski-dense subgroup (and in fact those with special properties such as $\R$-regularity if $K \subset \R$). This indicates that the construction of generic elements from \cite{PR-generic} in fact enables one to obtain {\it all of them}. More precisely, fix a $K$-torus $T_0$ of $G$ and as above let $[w_1], \ldots , [w_r]$ be all nontrivial conjugacy classes of $W(G , T_0)$. It follows from Proposition \ref{P:1} that one can pick $r$ distinct primes $p_1, \ldots , p_r$ such that for each $i = 1, \ldots , r$ there exists an embedding $\iota_i \colon K \hookrightarrow \Q_{p_i}$ such that $\iota_i(R) \subset \Z_{p_i}$ and $G$ splits over $\Q_{p_i}$. Then one shows that there is a maximal $\Q_{p_i}$-torus $T_i$ of $G/\Q_{p_i}$ such that
$$
\theta_{T_i}(\Ga(\overline{\Q_{p_i}}/\Q_{p_i})) \cap \iota_{T_0 , T_i}([w_i]) \neq \varnothing.
$$
Let
$$
\mathcal{U}_{p_i} = \psi_{\Q_{p_i}}(G(\Q_{p_i}) \times (T_i)_{\mathrm{reg}}(\Q_{p_i})) \ \ \text{where} \ \ \psi_{\Q_{p_i}}(g , t) = gtg^{-1}.
$$
We observe that $\mathcal{U}_{p_i}$ intersects every open subgroup of $G(\Q_{p_i})$. Since the $p_i$ are distinct, a standard approximation argument shows that since $\Gamma$ is Zariski-dense, its closure in $\prod_{i = 1}^r G(\Q_{p_i})$ is open, and therefore $\Gamma \cap \prod_{i = 1}^r \mathcal{U}_{p_i} \neq \varnothing$. Then the argument used in the proof of the above theorem shows that any element of this intersection is generic over $K$.

\vskip5mm

\section{Positive characteristic case: Theorem 1}\label{S:posit}

The argument given in \S\ref{S:Proof} is independent of the characteristic of the base field. So, to prove Theorem 1 in positive characteristic
we only need to provide a suitable analogue of Proposition \ref{P:1}. It suffices to prove the following.
\begin{prop}\label{P:2}
Let $R$ be a finitely generated subring of  an infinite finitely generated field $K$ of characteristic $p > 0$, and let $L/K$ be a finite Galois extension with Galois group $\mathscr{G} = \Ga(L/K)$. Fix a conjugacy class $\mathscr{C}$ of $\mathscr{G}$. Then there exists a (nontrivial) discrete valuation $v$ on $K$ such that the completion $K_v$ is locally compact, $R$ lies in the corresponding valuation ring $\mathcal{O}_v$, and for any extension $w \vert v$, the Frobenius automorphism of $L_w/K_v$ belongs to $\mathscr{C}$.
\end{prop}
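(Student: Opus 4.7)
The plan is to mimic the proof of Proposition \ref{P:1}, with $\F_p(t_1)$ playing the role of $\Q$ (as the base global field over which Chebotarev is applied) and a local field $\F_p(t_1)_u$ of characteristic $p$ playing the role of $\Q_p$. First I would use Noether normalization to find $t_1, \ldots, t_r \in R$ algebraically independent over $\F_p$ and $s \in \F_p[t_1, \ldots, t_r] \setminus \{0\}$ so that $R$ becomes integral over $\F_p[t_1, \ldots, t_r, s^{-1}]$; set $k = \F_p(t_1, \ldots, t_r)$, let $E$ be the Galois closure of $L$ over $k$ with group $\mathscr{H}$, pick a primitive element $\alpha$ of $E/k$ whose minimal polynomial $f$ has coefficients in $\F_p[t_1, \ldots, t_r, s^{-1}]$, fix $\sigma \in \mathscr{C}$, lift it to $\widetilde{\sigma} \in \mathscr{H}$ trivial on $K$ and restricting to $\sigma$ on $L$, and write $\widetilde{\sigma}(\alpha) = \sum c_j \alpha^j$ with $c_j = g_j/h$, $g_j, h \in \F_p[t_1, \ldots, t_r, s^{-1}]$.

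Next, apply Hilbert's irreducibility theorem to the Hilbertian global function field $\F_p(t_1)$ (viewing $t_2, \ldots, t_r$ as the variables to be specialized) to find $(a_2, \ldots, a_r) \in \F_p[t_1]^{r-1}$ such that $f_0(x) := f(t_1, a_2, \ldots, a_r, x)$ is irreducible in $\F_p(t_1)[x]$ and such that $h$, $\delta_f$, and $s$ do not vanish at $(t_1, a_2, \ldots, a_r)$. The standard specialization argument---localize $\F_p[t_1, \ldots, t_r]$ at the prime $(t_2 - a_2, \ldots, t_r - a_r)$, pass to its integral closure in $E$, and compare degrees exactly as in the proof of Proposition \ref{P:1}---produces a canonical isomorphism $\rho \colon \mathscr{H} \xrightarrow{\sim} \mathscr{H}_0 := \Ga(E_0/\F_p(t_1))$, where $E_0 = \F_p(t_1)[\alpha_0]$ with $\alpha_0$ a root of $f_0$; set $\widetilde{\sigma}_0 = \rho(\widetilde{\sigma})$. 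Applying Chebotarev's density theorem for the global function field $\F_p(t_1)$, pick a place $u$ of $\F_p(t_1)$ outside the support of $s(t_1, a_2, \ldots, a_r)$ at which $E_0$ is unramified and such that $\mathrm{Fr}(w \vert u) = \widetilde{\sigma}_0$ for some extension $w$. Write $K_u = \F_p(t_1)_u$, $\mathcal{E} = (E_0)_w$, $\varphi = \mathrm{Fr}(w \vert u)$, and let $\mathcal{O}$, $\mathfrak{p}$ denote the valuation ring and ideal of $\mathcal{E}$, with $\varepsilon \colon E_0 \hookrightarrow \mathcal{E}$ the completion embedding.

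To build an embedding $\iota \colon E \hookrightarrow \mathcal{E}$, I would perturb the naive specialization by choosing $a_j^{(1)} \in \mathcal{O}$ with $a_j^{(1)} \equiv a_j \,(\mathrm{mod}\: \mathfrak{p})$ for $j = 2, \ldots, r$ so that $(t_1, a_2^{(1)}, \ldots, a_r^{(1)})$ are algebraically independent over $\F_p$ in $\mathcal{E}$. Such perturbations exist by a Baire-category argument in the complete metric space $\prod_{j=2}^{r} (a_j + \mathfrak{p}) \subset \mathcal{O}^{r-1}$: for each nonzero $P \in \F_p[x_1, \ldots, x_r]$, the zero locus of $P(t_1, \cdot, \ldots, \cdot)$ is closed and nowhere dense (a Taylor expansion shows that containment of an open $\mathfrak{p}$-adic ball would force all partial derivatives of $P$ to vanish at some point, hence $P \equiv 0$), and a countable union of such cannot cover the space. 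Hensel's lemma applied to $f_1(x) := f(t_1, a_2^{(1)}, \ldots, a_r^{(1)}, x)$ (whose discriminant is a unit modulo $\mathfrak{p}$ because $\delta_{f_0}$ is) yields a root $\alpha_1 \in \mathcal{O}$ with $\alpha_1 \equiv \varepsilon(\alpha_0) \,(\mathrm{mod}\: \mathfrak{p})$; define $\iota$ by $t_1 \mapsto t_1$, $t_j \mapsto a_j^{(1)}$, $\alpha \mapsto \alpha_1$. The verification $\iota(\widetilde{\sigma}(a)) = \varphi(\iota(a))$ for $a \in E$ is then the verbatim analogue of the Claim in the proof of Proposition \ref{P:1}.

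Since $\widetilde{\sigma}|_K = \mathrm{id}$, the above identity forces $\iota(K) \subset \mathcal{E}^{\varphi} = K_u$. Taking $v$ to be the pullback of the valuation of $\mathcal{E}$ via $\iota|_K$, one has $R \subset \mathcal{O}_v$ (by integrality of $R[s^{-1}]$ over $\F_p[t_1, \ldots, t_r, s^{-1}]$ combined with the choice of $u$ avoiding the support of $s$), and $K_v = K_u$ (since $\iota(K) \supset \F_p(t_1)$ is already dense in $K_u$), so $K_v$ is locally compact; for any extension $w' \vert v$ in $L$, the Frobenius $\mathrm{Fr}(L_{w'}/K_v)$ lies in the conjugacy class $\mathscr{C}$, as required. \textbf{The main obstacle} relative to Proposition \ref{P:1} is the perturbation step: in characteristic zero the existence of algebraically independent perturbations follows trivially from the uncountable transcendence degree of $\Z_p$ over $\Q$, whereas here one must impose algebraic independence over $\F_p$ (not merely over $\F_p(t_1)$, since $t_1$ is built into the local field via the choice of $u$), which is what necessitates the Baire-category/Taylor-expansion argument above; one should also verify that Hilbert's irreducibility theorem over $\F_p(t_1)$ furnishes specializations in $\F_p[t_1]$ preserving irreducibility, which is standard but deserves citation.
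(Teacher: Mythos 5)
Your proposal follows essentially the same route as the paper's own proof: specialize all but one of the transcendence variables, apply Hilbert irreducibility and Chebotarev over the resulting global function field, and then perturb the specialization inside the local completion to recover an embedding of the original field. The paper achieves the perturbation by invoking the uncountability of the valuation ideal directly; your Baire-category argument is a somewhat heavier way of saying the same thing, so no real difference there.

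There is, however, one genuine gap. You invoke Noether normalization to produce $t_1,\ldots,t_r \in R$ algebraically independent over $\F_p$ with $R$ integral over $\F_p[t_1,\ldots,t_r,s^{-1}]$, and then pass to "the Galois closure $E$ of $L$ over $k = \F_p(t_1,\ldots,t_r)$." For this to make sense --- i.e.\ for $L$ to be contained in a finite \emph{Galois} extension $E$ of $k$, for $E/k$ to admit a primitive element $\alpha$ with separable minimal polynomial $f$ whose discriminant $\delta_f$ is nonzero, and for the Hilbert/Chebotarev/Hensel machinery downstream to apply --- you need $L/k$ to be \emph{separable}. An arbitrary transcendence basis produced by Noether normalization does not guarantee this in characteristic $p$: you must take a \emph{separating} transcendence basis (which exists because $\F_p$ is perfect, and with more work can be arranged to lie inside $R$ with the integrality property you want). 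The paper is explicit on exactly this point, choosing "a separable transcendence basis $t_0,\ldots,t_r$ for $K$ over $\F_p$" so that $K$ is finite \emph{separable} over $k$. Without that adjective your construction of $E$, $\alpha$, $f$, $\delta_f$ silently assumes what it needs to prove.

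A secondary, smaller issue: your justification that the zero set of $P(t_1,\cdot,\ldots,\cdot)$ is nowhere dense appeals to "all partial derivatives of $P$ vanish at some point, hence $P\equiv 0$." In characteristic $p$ this implication fails (all ordinary partials of $x^p$ vanish identically), so the sentence as written is wrong. The conclusion you want is still correct --- one can argue by induction on the number of variables (a polynomial over an infinite field vanishing on a ball vanishes identically), or use Hasse derivatives in place of ordinary partials --- but the argument should be restated. In any case, the paper's direct appeal to uncountability sidesteps the whole discussion.
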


We will only indicate the changes that need to be made in the proof of Proposition \ref{P:1}. Again, we may (and we will) assume that the field of fractions of $R$ coincides with $K$, and then  find in $R$ a separable transcendence basis $t_0, \ldots , t_r$ for $K$ over the prime subfield $\mathbb{F}_p$ (which means that $K$ is a finite separable extension of $k := \mathbb{F}_p(t_0, \ldots , t_r)$).    Let $E$ denote the Galois closure of $L$ over $k$ with Galois group $\mathscr{H} = \Ga(E/k)$. Set $A = \mathbb{F}_p[t_0]$ and $k_0 = \mathbb{F}_p(t_0)$, and  pick a primitive element $\alpha \in E$ over $k$ whose minimal polynomial is of the form
$$
f(x) = x^n + z_{n-1}(t_1, \ldots , t_r)x^{n-1} + \cdots + z_0(t_1, \ldots , t_r)
$$
where $z_i(t_1, \ldots , t_r) \in A[t_1, \ldots , t_r]$. We can find $h_1 \in A[t_1, \ldots, t_r]$ so that the extension of the corresponding localizations $R_{h_1} / A[t_1, \ldots , t_r]_{h_1}$ is integral. Next, pick a representative $\sigma \in \mathscr{C}$ and let $\widetilde{\sigma} \in \mathscr{H}$ be such that $\widetilde{\sigma} \vert L = \sigma$. There exist $g_0, \ldots , g_{n-1}$ and $h_2 \in A[t_1, \ldots , t_r]$ such that
$$
\widetilde{\sigma}(\alpha) = \sum_{j = 0}^{n-1} c_j \alpha^j \ \ \text{where} \ \ c_j = g_j/h_2.
$$
Set $h = h_1 h_2$. By Hilbert's Irreducibility Theorem, one can find  $(a^0_1, \ldots, a^0_r) \in (k_0)^{r}$ such that $h(a^0_1, \ldots , a^0_r) \cdot \delta_f(a^0_1, \ldots , a^0_r) \neq 0$, where $\delta_f\in A[t_1,\ldots, t_r]$ is the discriminant of $f$, and the polynomial
$$
f_0(x) = x^n + z_{n-1}(a^0_1, \ldots , a^0_r)x^{n-1} + \cdots + z_0(a^0_1, \ldots , a^0_r) \in k_0[x]
$$
is irreducible. We can find a finite set of places $S$ of $k_0$, that includes the place at infinity, such that for any place $v \notin S$ and the corresponding valuation ring $\mathcal{O}_{k_0 , v}$, we have the inclusions
$$
a^0_1, \ldots , a^0_r \in \mathcal{O}_{k_0 , v} \ \ \text{and} \ \  h(a^0_1, \ldots , a^0_r) \, , \, \delta_f(a^0_1, \ldots , a^0_r) \in \mathcal{O}_{k_0 , v}^{\times}.
$$
We then consider the extension $E_0 = k_0(\alpha_0)$ where $\alpha_0$ is a root of $f_0$. As in the proof of Proposition \ref{P:1}, we see that
$E_0/k_0$ is a Galois extension such that the specialization $t_i \mapsto a^0_i$ for $i = 1, \ldots , r$ yields a natural isomorphism between $\mathscr{H}$ and the Galois group $\mathscr{H}_0 = \Ga(E_0/k_0)$.  We let $\widetilde{\sigma}_0 \in \Ga(E_0/k_0)$ denote the automorphism corresponding to $\widetilde{\sigma}$ under this isomorphism. Applying Chebotarev's Density Theorem, we find a place $v_0 \notin S$ of $k_0$ which is unramified in $E_0$ and such that for a suitable extension $w_0$, the Frobenius automorphism $\mathrm{Fr}(w_0 \vert v_0)$ is $\widetilde{\sigma}_0$. The valuation $w_0$ corresponds to an embedding $\varepsilon \colon E_0 \hookrightarrow ((k_0)_{v_0})^{\mathrm{sep}}$ into the separable closure of the completion $(k_0)_{v_0}$, and we set $\mathcal{E} = (k_0)_{v_0}(\varepsilon(\alpha_0))$ observing that it is naturally identified with the completion $(E_0)_{w_0}$.

Let $\mathcal{O}_0$ be the valuation ring in $(k_0)_{v_0}$, and let $\mathfrak{p}_0$ be its maximal ideal. Since the latter is uncountable, we can find $a_i^1 \in a_i^0 + \mathfrak{p}_0$ for $i = 1, \ldots , r$ so that the elements $t_0, a_1^1, \ldots , a_r^1 \in (k_0)_{v_0}$ are algebraically independent over $\mathbb{F}_p$. Then there is an embedding $\iota_0 \colon k \hookrightarrow (k_0)_{v_0}$ that sends $t_0$ to $t_0$ and $t_i$ to $a_i^1$ for $i = 1, \ldots , r$. Consider the polynomial
$$
f_1(x) := x^n + z_{n-1}(a_1^1, \ldots , a_r^1) x^{n-1} + \cdots + z_0(a_1^1, \ldots , a_r^1) \in \mathcal{O}_0[x].
$$
Applying Hensel's Lemma as in the proof of Proposition \ref{P:1}, we see that there is a root $\alpha_1$ of $f_1$ in the valuation ring $\mathcal{O}(\mathcal{E})$ of $\mathcal{E}$ which is congruent to $\varepsilon(\alpha_0)$ modulo the corresponding valuation ideal. Then $\iota_0$ extends to a dense embedding $\widetilde{\iota} \colon E \hookrightarrow \mathcal{E}$ that sends $\alpha$ to $\alpha_1$. Let $\iota$ be the restriction of $\widetilde{\iota}$ to $K$, and let $v$ be the pullback of  $w_0$ to $K$. Then the completion $K_v$ can be identified with the compositum $\iota(K)(k_0)_{v_0}$ inside $\mathcal{E}$, hence it is locally compact. It follows from our construction that $\iota(A[t_1, \ldots , t_r]) \subset \mathcal{O}_v$ (= the valuation ring of $K_v$) and $h_1(a_1^1, \ldots , a_r^1) \in \mathcal{O}_v^{\times}$. Since the ring extension $R_{h_1}/A[t_1, \ldots, t_r]_{h_1}$ is integral, we conclude that $\iota(R) \subset \mathcal{O}_v$. Finally, repeating verbatim the argument given in the proof of Proposition \ref{P:1}, we see that if $w$ is the pullback to $L$ of the valuation on $\mathcal{E}$ with respect to the restriction $\widetilde{\iota} \vert L$, then $L_w/K_v$ is unramified and the Frobenius automorphism $\mathrm{Fr}(w \vert v)$ is $\sigma$. Now, the fact that $L/K$ is a Galois extension implies that \emph{any} extension $w \vert v$ is unramified with $\mathrm{Fr}(w \vert v)$ belonging to the conjugacy class $\mathscr{C}$, as required.

\smallskip

\noindent {\bf Remark 4.2.} The proof of Proposition \ref{P:2} actually gives an infinite number of inequivalent valuations $v$ having the properties indicated in the statement.

\section{Positive characteristic: existence of generic elements}\label{S:Exist}

The goal of this section is to prove Theorem 2 (of the introduction). The argument relies heavily on the results of Pink \cite{Pink-CS} which we briefly summarize below. But first we would like to reduce the proof to the case where $\Gamma'$ is finitely generated. We recall that an abstract semigroup is called \emph{locally finite} if every finitely generated subsemigroup of it is finite.
\begin{lemma}\label{L:Exist1}
Let $G$ be an absolutely almost simple algebraic $K$-group, and let $\Gamma'$ be a Zariski-dense subsemigroup of $G(K)$. If $\Gamma'$ is not locally finite, then it contains a finitely generated Zariski-dense subsemigroup $\Delta'$ which is also Zariski-dense.
\end{lemma}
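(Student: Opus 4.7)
The plan is to combine a standard Noetherian-descent argument yielding Zariski-density with an augmentation by generators of an infinite finitely generated subsemigroup, whose existence is guaranteed by the hypothesis that $\Gamma'$ is not locally finite. (The last clause of the stated conclusion seems to repeat itself; presumably it should assert that $\Delta'$ is Zariski-dense \emph{and} not locally finite --- or, via the semigroup-to-group trick below, contains an element of infinite order --- which is what Theorem 2 will need.)

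For the Zariski-density step, I would consider the family of closed subvarieties $Z_F := \overline{\langle F \rangle^{+}}$ of $G$, where $F$ ranges over finite subsets of $\Gamma'$ and $\langle F \rangle^{+}$ denotes the subsemigroup of $G(K)$ generated by $F$. The family is upward directed, since $Z_F \cup Z_{F'} \subseteq Z_{F \cup F'}$, and by Noetherianity of the Zariski topology on $G$ it has a maximal element $Z_{F_0}$. For any $g \in \Gamma'$, the inclusion $Z_{F_0} \subseteq Z_{F_0 \cup \{g\}}$ must be an equality by maximality, hence $g \in Z_{F_0}$; thus $\Gamma' \subseteq Z_{F_0}$, and Zariski-density of $\Gamma'$ gives $Z_{F_0} = G$. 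So $\langle F_0 \rangle^{+}$ is already a finitely generated Zariski-dense subsemigroup of $\Gamma'$.

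To incorporate non-local-finiteness, I would use the hypothesis to select a finite $F_1 \subset \Gamma'$ with $\langle F_1 \rangle^{+}$ infinite, and set $\Delta' := \langle F_0 \cup F_1 \rangle^{+}$. Then $\Delta'$ is finitely generated; it is Zariski-dense, since it contains $\langle F_0 \rangle^{+}$; and it is infinite, since it contains $\langle F_1 \rangle^{+}$. So the Noetherian step and the augmentation step cleanly decouple, each using a different hypothesis.

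The main obstacle --- which is what makes the lemma useful for Theorem 2 --- is the passage from "infinite" to "contains an element of infinite order". The key observation is that any subsemigroup $S$ of a group consisting entirely of torsion elements is automatically a subgroup: for $s \in S$ with $s^{k} = 1$, the inverse $s^{-1} = s^{k-1}$ lies in $S$. So if $\Delta'$ were a torsion semigroup it would be a finitely generated torsion subgroup of the linear group $G(K) \subset \mathrm{GL}_n(K)$; applying Schur's theorem in characteristic zero (and its analog for finitely generated fields of positive characteristic, which uses that torsion elements in $\mathrm{GL}_n$ over such a field have bounded exponent) would then contradict the infiniteness established above. Hence $\Delta'$ contains an element of infinite order, as required.
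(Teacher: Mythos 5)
The core of your argument breaks at the Noetherian step, and it breaks in a way that the ``not locally finite'' hypothesis is specifically designed to patch. Noetherianity of the Zariski topology on $G$ gives the \emph{descending} chain condition on closed subsets, i.e.\ every nonempty family of closed subsets has a \emph{minimal} element. It does not give maximal elements: an increasing chain of closed subsets need not stabilize (e.g.\ $\{1\} \subsetneq \{1,2\} \subsetneq \{1,2,3\} \subsetneq \cdots$ in $\mathbb{A}^1$, or, more to the point, the finite subgroups $\mu_{n!} \subsetneq \mu_{(n+1)!}$ in $\mathbb{G}_m$). So the directed family $\{Z_F\}$ need not have a maximal element, and the line ``by Noetherianity of the Zariski topology on $G$ it has a maximal element $Z_{F_0}$'' is unjustified. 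The failure is not cosmetic: taking $G = \mathbb{G}_m$ and $\Gamma' = \mu_\infty$ (all roots of unity, a Zariski-dense locally finite subgroup), every $Z_F$ is a proper finite subgroup and no $Z_F$ equals $G$; your argument, which nowhere uses the ``not locally finite'' hypothesis nor that $G$ is absolutely almost simple, would produce a finitely generated Zariski-dense subsemigroup here, which does not exist. That your proof never invokes non-local-finiteness in the density step is the tell that something has gone wrong.

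The paper avoids this by maximizing a bounded integer rather than searching for a maximal closed set: choose a finitely generated subsemigroup $\Delta' \subset \Gamma'$ whose Zariski closure $H = \overline{\Delta'}$ has maximum possible \emph{dimension}. Non-local-finiteness guarantees $\dim H > 0$ (an infinite finitely generated subsemigroup cannot have finite closure). Maximality of dimension then forces, for any $\gamma' \in \Gamma'$, the closure of $\langle \Delta', \gamma' \rangle^{+}$ to have the same identity component $H^{\circ}$ as $H$; hence $\gamma'$ normalizes $H^{\circ}$, so all of $\Gamma'$ does, so $G = \overline{\Gamma'}$ does. Since $G$ is absolutely almost simple and $\dim H^{\circ} > 0$, this forces $H^{\circ} = G$, i.e.\ $\Delta'$ is Zariski-dense. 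This is where the two hypotheses you did not use actually enter. Your remarks on the redundantly worded conclusion and the Schur-theorem step (torsion subsemigroups are subgroups, and a finitely generated torsion linear group is finite) are correct and agree with the paper's own discussion following the lemma, but the density step needs to be replaced by the dimension argument.
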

\begin{proof}
Pick a finitely generated subsemigroup $\Delta' \subset \Gamma'$ for which the Zariski closure $H = \overline{\Delta'}$ has maximum possible dimension. We note that the Zariski closure of a subsemigroup is actually a  subgroup, and  since $\Gamma'$ is not locally finite, $H$ is of positive dimension. Take any $\gamma' \in \Gamma'$, and let $H'$ be the Zariski closure of the subsemigroup generated by $\Delta'$ and $\gamma'$. By construction, $\dim H = \dim H'$, and therefore the connected components $H^{\circ}$ and $(H')^{\circ}$ coincide. It follows that $\gamma'$ normalizes $H^{\circ}$. Since $\gamma' \in \Gamma'$ is arbitrary, $H^{\circ}$ is normalized by all of $\Gamma'$, hence by $\overline{\Gamma'} = G$. Now, since $\dim H^{\circ} > 0$,  the fact that $G$ is absolutely almost simple implies that $H^{\circ} = G$, so $\Delta'$ is Zariski-dense.
\end{proof}

Since a semigroup containing an element of infinite order is not locally finite, it follows from the lemma that $\Gamma'$ as in Theorem 2 always contains a finitely generated subsemigroup $\Delta'$ which is Zariski-dense in $G$. Then it is enough to establish the existence of $K$-generic semi-simple elements of infinite order in $\Delta'$. {\em Thus, we may $($and we will$)$ henceforth assume that $\Gamma'$ is finitely generated and will denote by $\Gamma$ the subgroup of $G(K)$ generated by $\Gamma'$}.
\vskip1mm

Before we proceed with the proof of the theorem, we would like to point out that a theorem due to Schur (cf.\,\cite[Theorem 9.9]{Lam}) implies that the condition that $\Gamma'$ is not locally finite is in fact equivalent to the condition that it contains an element of infinite order. (Technically, Schur's theorem treats linear {\it groups}, so we note that a semigroup consisting of elements of finite order is automatically a group.)

\vskip3mm

\noindent {\bf On Pinks' approximation results.} In this subsection, we will review the notions involved in the results of Pink \cite{Pink-CS}, \cite{Pink-SA} and then give a precise statement of his main result in \cite{Pink-CS} in the form needed for our purpose. For $i = 1, \ldots , r$, let $G_i$ be a connected  absolutely simple adjoint group over a local (i.e.\,nondiscrete locally compact) field $F_i$. Let $G$ denote the group scheme over the commutative semi-simple ring $F = \prod_{i = 1}^r F_i$ with fibers $G_i$ so that $G(F) = \prod_{i = 1}^r G_i(F_i)$. Let $\Gamma \subset G(F)$ be a subgroup with compact closure and with Zariski-dense projections in all factors. Following Pink \cite{Pink-CS}, we say that a triple $(E, H, \varphi)$ consisting of a closed semisimple subring $E \subset F$ such that $F$ is a module of finite type over $E$,  a group scheme $H$ over $E$ whose fibers over factor fields of $E$ are  connected absolutely simple adjoint groups, and an isogeny $\varphi \colon H \times_E F \to G$ such that $\Gamma \subset \varphi(H(E))$, is a {\it weak quasi-model} of the triple $(F, G, \Gamma)$. If in addition the derivative of $\varphi$ does not vanish on any fiber, the triple $(E, H, \varphi)$ is called a {\it quasi-model}. The triple $(F, G, \Gamma)$ is called {\it minimal} if for any quasi-model $(E, H, \varphi)$ we necessarily have $E = F$ and $\varphi$ is an isomorphism. Now, if $(E, H, \varphi)$ is a quasi-model then the fact that the fibers of $H$ over factor fields of $E$ are adjoint makes the isogeny $\varphi$ purely inseparable. It follows that the induced map $H(E) \to G(F)$ is injective, which enables us to identify $\Gamma$ with its pre-image in $H(E)$. Then the triple $(E, H, \Gamma)$ satisfies the same assumptions as $(F, G, \Gamma)$. A (weak) quasi-model $(E, H, \varphi)$ is said to be {\it minimal} if the triple $(E, H, \Gamma)$ is minimal in the above sense. Pink \cite[Theorem 3.6]{Pink-CS} proves that every triple $(F, G, \Gamma)$ has a minimal quasi-model $(E, H, \varphi)$; the subring $E$ in this model is unique, and $H$ and $\varphi$ are determined up to unique isomorphism.
\vskip1mm

Now, let $(E, H, \varphi)$ be a minimal model of $(F, G, \Gamma)$, and view $\Gamma$ as a subgroup of $H(E)$. Let $\widetilde{H}$ be the universal cover of $H$ (it is the direct product of the universal covers of the factors of $H$). Then the commutator morphism of $\widetilde{H}$ factors through a unique morphism $[\ , \,]^{\sim} \colon H \times H \to \widetilde{H}$. Let $\widetilde{\Gamma}$ be the subgroup of $\widetilde{H}(E)$ generated by $[\Gamma , \Gamma]^{\sim}$.

\vskip2mm

\noindent {\bf Theorem.} (\cite[Main Theorem 0.2]{Pink-CS}) {\it The closure of \,$\widetilde{\Gamma}$ in $\widetilde{H}(E)$ is open.}

\vskip3mm

We can now state the key proposition that leads to the existence of generic elements. In the rest of this paper $G$ will denote a connected absolutely simple adjoint group defined over a finitely generated field $K$ and $\Gamma\subset G(K)$ a Zariski-dense subgroup that contains an element of infinite order. For a discrete valuation $v$ of $K$ such that the completion $K_v$ is locally compact and $\Gamma$  has compact closure in $G(K_v)$, we let $(E_v, H_v, \varphi_v)$ denote a minimal quasi-model of $(K_v, G, \Gamma)$. Let $r$ be the number of conjugacy classes in the Weyl group of (a maximal torus of) $G$.
\begin{prop}\label{P:Exist1}
Assume that there exist a subfield $K' \subset K$ such that $K/K'$ is a purely inseparable extension and valuations $v_1, \ldots , v_r$ of $K$ satisfying the following properties:

\vskip2mm

\noindent {\rm (0)} \parbox[t]{15cm}{Each completion $K_{v_i}$ is locally compact and $\Gamma$ has compact closure in $G(K_{v_i})$;}

\vskip1mm

\noindent {\rm (1)} \parbox[t]{15cm}{For each $i \in \{1, \ldots , r\}$, the group $H_{v_i}$ is $E_{v_i}$-split and $E_{v_i}$ contains $K'$;}

\vskip1mm

\noindent {\rm (2)} \parbox[t]{15cm}{Set $V = \{v_1, \ldots , v_r\}$, $K_V = \prod_{v \in V} K_v$, $E_V = \prod_{v \in V} E_v\,(\subset K_V)$, $H_{V} = \prod_{v \in V} H_v$, and $\varphi_{V} = \prod_{v \in V} \varphi_v$; then $(E_V, H_{V}, \varphi_{V})$ is a minimal quasi-model of $(K_V, G, \Gamma)$ $($here $\Gamma$ is diagonally embedded into $G(K_V) = \prod_{v \in V} G(K_v))$.}

\vskip2mm

\noindent Then $\Gamma'\,(\subset\Gamma)$ contains regular semi-simple elements of infinite order that are generic over $K$.
\end{prop}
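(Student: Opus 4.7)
The strategy mirrors Remark 3.2, with Pink's Main Theorem 0.2 substituting for the elementary approximation argument available in characteristic zero. First, I fix a reference maximal $K$-torus $T_0$ of $G$, and enumerate representatives $w_1, \ldots, w_r$ of the conjugacy classes of $W(G, T_0)$. For each $i$, assumption (1) lets me invoke the classification of maximal tori in a split reductive group over a local field (via the Galois action on the character lattice) to produce a maximal $E_{v_i}$-torus $S_i$ of $H_{v_i}$ whose base-change $T_i := \varphi_{v_i}(S_i \times_{E_{v_i}} K_{v_i})$ --- a maximal $K_{v_i}$-torus of $G$ --- satisfies
\[
\theta_{T_i}(\Ga(\overline{K_{v_i}}/K_{v_i})) \cap \iota_{T_0, T_i}([w_i]) \neq \varnothing.
\]
Setting $\mathcal{U}_{v_i} := \psi_{K_{v_i}}(G(K_{v_i}) \times (T_i)_{\mathrm{reg}}(K_{v_i}))$, Lemma \ref{L:1} guarantees that $\mathcal{U}_V := \prod_i \mathcal{U}_{v_i}$ is open in $G(K_V)$. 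Since each $S_i$ is $E_{v_i}$-rational, regular elements of $S_i(E_{v_i})$ produce, via $\varphi_{v_i}$, elements of $\mathcal{U}_{v_i} \cap \varphi_{v_i}(H_{v_i}(E_{v_i}))$ arbitrarily close to the identity; consequently $\mathcal{U}_V$ meets every open neighborhood of $e$ in $\varphi_V(H_V(E_V))$.

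Next, I invoke Pink's Main Theorem 0.2: by assumption (2), $(E_V, H_V, \varphi_V)$ is a minimal quasi-model of $(K_V, G, \Gamma)$, so the closure of $\widetilde{\Gamma}$ in $\widetilde{H_V}(E_V)$ is open. Pushing forward along the central isogeny $\widetilde{H_V} \to H_V$ and then along the purely inseparable isogeny $\varphi_V$ (which identifies $H_V(E_V)$ with a closed subgroup of $G(K_V)$), the closure $\overline{\Gamma}$ of $\Gamma$ in $G(K_V)$ becomes an open subgroup of $\varphi_V(H_V(E_V))$. To descend from $\Gamma$ to $\Gamma'$, I use assumption (0): $\overline{\Gamma}$ is compact, and positive powers of an infinite-order element of $\Gamma'$ have a subsequence converging to $e$, so $e \in \overline{\Gamma'}$; a closed subsemigroup of a compact group containing $e$ is a subgroup, whence $\overline{\Gamma'}$ is a closed subgroup containing $\Gamma$, forcing $\overline{\Gamma'} = \overline{\Gamma}$. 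Combining, $\overline{\Gamma'} \cap \mathcal{U}_V \neq \varnothing$, so there exists $g \in \Gamma' \cap \mathcal{U}_V$; shrinking $\mathcal{U}_V$ by an identity neighborhood avoiding the (finite) torsion of each $T_i(K_{v_i})$ ensures $g$ has infinite order.

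The verification of genericity then proceeds exactly as in the proof of Theorem 1. For such $g$, the connected centralizer $T = Z_G(g)^{\circ}$ is a maximal $K$-torus of $G$, and over each $K_{v_i}$ it is $G(K_{v_i})$-conjugate to $T_i$. Formula (\ref{E:KKK}) then yields $\theta_T(\Ga(\overline{K_{v_i}}/K_{v_i})) \cap \iota_{T_0, T}([w_i]) \neq \varnothing$; letting $i$ vary and using the inclusions $\Ga(\overline{K_{v_i}}/K_{v_i}) \hookrightarrow \Ga(\overline{K}/K)$, every conjugacy class of $W(G, T)$ is hit by $\theta_T(\Ga(\overline{K}/K))$, so Jordan's theorem gives $\theta_T(\Ga(\overline{K}/K)) \supset W(G, T)$, establishing that $g$ is $K$-generic.

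The main obstacle I anticipate is the first step: producing, for every conjugacy class $[w_i]$, an $E_{v_i}$-rational torus $S_i$ whose $K_{v_i}$-Galois image realizes the prescribed class. When $K_{v_i}/E_{v_i}$ is a nontrivial residue extension the Frobenius of $K_{v_i}$ is a power of the Frobenius of $E_{v_i}$, complicating the choice of $S_i$; the role of assumption (1) --- in particular, the purely inseparable subfield $K'$ --- should be precisely what makes this selection feasible.
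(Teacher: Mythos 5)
Your overall strategy matches the paper's, but you have correctly diagnosed the gap in your own argument: the construction, for each conjugacy class $[w_i]$, of a torus $S_i$ over $E_{v_i}$ whose base change to $K_{v_i}$ realizes the prescribed Galois class. You flag this as an unresolved obstacle, and it is indeed the heart of the proof. The paper handles it as follows. Since $E_{v_i}\supset K'$ and $K/K'$ is purely inseparable, the extension $K_{v_i}/E_{v_i}$ is \emph{purely inseparable}; because the residue fields of local fields are perfect, this means the residue fields agree and $\Ga(E_{v_i}^{\mathrm{sep}}/E_{v_i})$ is canonically identified with $\Ga(K_{v_i}^{\mathrm{sep}}/K_{v_i})$. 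So the ``Frobenius of $K_{v_i}$ is a power of the Frobenius of $E_{v_i}$'' worry that you raise simply does not arise. The construction of the torus is then done on the universal cover $\widetilde{H}_{v_i}$ (which is also $E_{v_i}$-split, by hypothesis (1)): one applies the argument of Lemma~1 of \cite{PR-generic} --- valid for split groups over a local field of any characteristic --- to find a maximal $E_{v_i}$-torus $\widetilde{\mathcal{T}}_i$ of $\widetilde{H}_{v_i}$ realizing the class. To transport this across the inseparable isogeny $\pi_{v_i}=\varphi_{v_i}\circ(\varpi_{v_i})_{K_{v_i}}$ to a statement about $\mathcal{S}_i=\pi_{v_i}((\widetilde{\mathcal{T}}_i)_{K_{v_i}})$, the paper uses Chevalley's result (\cite[Prop.\,4]{Cheval}) that an isogeny induces a bijection of root systems in which each root is scaled by a power of $p$, so that the Weyl groups are canonically identified and the Galois image transports correctly. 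Neither the Galois group identification nor the Chevalley step appears in your proposal, and without them your assertion about $T_i$ is unproved.

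Two further, more minor differences. First, rather than deducing that $\overline{\Gamma}$ is an open subgroup of $\varphi_V(H_V(E_V))$ by pushing Pink's theorem through $\widetilde{H}_V(E_V)\to H_V(E_V)\hookrightarrow G(K_V)$ (which needs the finite-index statement for $\widetilde{H}_V(E_V)\to H_V(E_V)$, i.e.\ finiteness of a Galois $H^1$ over local fields), the paper produces the desired element more directly: it picks $\widetilde{\gamma}\in\widetilde{\Gamma}\cap(\widetilde{\Omega}\cap\prod_i\widetilde{\mathcal{U}}_i)$ using openness of $\overline{\widetilde\Gamma}$ and the fact that each $\widetilde{\mathcal{U}}_i$ meets every open subgroup of $\widetilde{H}_{v_i}(E_{v_i})$, then takes the image $\gamma\in\Gamma\cap(\Omega\cap\prod_i\mathcal{U}_i)$, and finally uses the fact that $\overline{\Gamma'}\supseteq\Gamma$ to find $\gamma'\in\Gamma'$ in the same open set. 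Your route is workable but calls on more machinery than needed. Second, your device for securing infinite order --- shrinking $\mathcal{U}_V$ to avoid torsion of $T_i(K_{v_i})$ --- is not quite the same as the paper's, which chooses a compact-open $\Omega\subset G(K_V)$ containing no element with a $v$-component of finite order but not unipotent, and then uses that the element produced is regular semisimple (hence not unipotent) in each component; both devices work, but the paper's is cleaner because it does not require tracking which torus the element lands in.
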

\begin{proof}
For $v \in V$, we let $\varpi_v \colon \widetilde{H}_v \to H_v$ denote the universal $E_v$-cover, and set $\pi_v = \varphi_v \circ (\varpi_v)_{K_v}$. Since $H_v$ is $E_v$-split, $\widetilde{H}_v$ is also $E_v$-split. As above, we can unambiguously identify $\Gamma$ with a subgroup of $H_V(E_V) = \prod_{v \in V} H_v(E_v)$. Furthermore, for each $v \in V$, let  $[\ , \,]^{\sim}_v \colon H_v\times H_v \to \widetilde{H}_v$ be the $E_v$-morphism obtained from the commutator map, and let
$$
[\ , \,]^{\sim} = \prod_{v \in V} [\ , \,]^{\sim}_v
$$
be the product of these morphisms regarded either as a morphism of $E_V$-schemes $H_{V} \times H_{V} \to \widetilde{H}_V := \prod_{v \in V} \widetilde{H}_v$, or simply as a map
$$
H_{V}(E_V) \times H_{V}(E_V) \to \widetilde{H}_{V}(E_{V}) = \prod_{v \in V} \widetilde{H}_v(E_v).
$$
Let $\widetilde{\Gamma}$ be the subgroup of $\widetilde{H}_{V}(E_V)$ generated by $[\Gamma , \Gamma]^{\sim}$. Since by our assumption $(E_V, H_{V}, \varphi_{V})$ is a minimal quasi-model of $(K_V, G, \Gamma)$, the approximation theorem of Pink stated above tells us that the closure of $\widetilde{\Gamma}$ in $\widetilde{H}_{V}(E_V)$ is open.

\vskip1mm

Now, for $i\leqslant r$, let $\widetilde{T}_i$ be a maximal $E_{v_i}$-split torus of $\widetilde{H}_{v_i}$, and let $S_i = \pi_{v_i}((\widetilde{T}_i)_{K_{v_i}})$ be the corresponding maximal $K_{v_i}$-torus of $G$. We extend the associated comorphism
$$
\pi_i^* \colon {\rm{X}}(S_i) \to {\rm{X}}(\widetilde{T}_i)
$$
of the character groups to an isomorphism of vector spaces
$$
\tau_i: V_i := {\rm{X}}(S_i) \otimes_{\Z} \Q {\longrightarrow} {\rm{X}}(\widetilde{T}_i) \otimes_{\Z} \Q =: \widetilde{V}_i.
$$
We consider the automorphism groups of the root systems $\Phi(\widetilde{H}_{v_i} , \widetilde{T}_i)$ and $\Phi(G , S_i)$ as subgroups of $\mathrm{GL}(\widetilde{V}_i)$ and $\mathrm{GL}(V_i)$, respectively. Then by \cite[Prop.\,4]{Cheval}, the isomorphism $$\lambda_i \colon \mathrm{GL}(\widetilde{V}_i) \to \mathrm{GL}(V_i), \ \ \ g \mapsto \tau_i^{-1} \circ g \circ \tau_i$$ induces an isomorphism $W(\widetilde{H}_{v_i}, \widetilde{T}_i) \to W(G , S_i)$ of the Weyl groups.
\vskip1mm

We fix a maximal $K$-torus $S$ of $G$ and let $[w_1],\ldots, \,[w_r]$ be the distinct nontrivial conjugacy classes in the Weyl group $W(G,S)$.   We use $\iota_{S_i,S}$ to identify conjugacy classes in the Weyl group $W(G,S_i)$ with conjugacy classes in $W(G,S)$. For $i\leqslant r$, we pick ${\widetilde{w}}_i \in W(\widetilde{H}_{v_i} , \widetilde{T}_{i})$ so that $[\lambda_{i} ({\widetilde{w}}_i)]= [w_i]$.
Since $\widetilde{H}_{v_i}$ is $E_{v_i}$-split for all $i$, the argument used in \cite{PR-generic} to prove Lemma 1 (this argument  works in all characteristics) shows that  for $i \leqslant r$, one can find a maximal
$E_{v_i}$-torus $\widetilde{\mathcal{T}}_{i}$ of $\widetilde{H}_{v_i}$ such that
\begin{equation}\label{E:YYY1}
\theta_{\widetilde{\mathcal{T}}_{i}}(\Ga(E_{v_i}^{\mathrm{sep}}/E_{v_i})) \cap \iota_{\widetilde{T}_{i}, \widetilde{\mathcal{T}}_{i}}([{\widetilde{w}}_i]) \neq \varnothing.
\end{equation}
Let $\mathcal{S}_{i} =\pi_{v_i}(({\widetilde{\mathcal{T}}}_{i})_{K_{v_i}})$. Then $\mathcal{S}_{i}$ is a maximal $K_{v_i}$-torus of $G$. Let
$${\widetilde{\mathcal{U}}}_i ={\widetilde{\psi}}_i ({\widetilde{H}}_{v_i}(E_{v_i}) \times ({\widetilde{\mathcal{T}}}_{i})_{\mathrm{reg}}(E_{v_i})), \ \ \ {\mathrm{where}} \ \  {\widetilde\psi}_i: {\widetilde{H}}_{v_i}\times {\widetilde{\mathcal{T}}}_{i}\rightarrow {\widetilde{H}}_{v_i}, \ \ \ (\tilde{h},\tilde{t})\mapsto \tilde{h}\,\tilde{t}\,{\tilde{h}}^{-1},
$$ and $${{\mathcal{U}}}_i ={{\psi}}_i ({{G}}(K_{v_i}) \times ({{\mathcal{S}}}_{i})_{\mathrm{reg}}(K_{v_i})), \ \ \ {\mathrm{where}} \ \  {\psi}_i: {{G}}\times {{\mathcal{S}}}_{i}\rightarrow G, \ \ \ ({g},{s})\mapsto {g}{s}{{g}}^{-1}.
$$

Observe that by the Open Mapping Theorem, ${\widetilde{\mathcal{U}}}_i$ and $\mathcal{U}_i$ are open in $\widetilde{H}_{v_i}(E_{v_i})$ and $G(K_{v_i})$ respectively and they clearly intersect every open subgroup of the respective ambient groups. Let $\Omega$ be a compact-open subgroup of $G(K_V):=\prod _{v\in V}G(K_v)$ that does not contain any element whose $v$-component, for some $v\in V$,  is of finite order but not unipotent. Let $\widetilde\Omega$ be a compact-open  subgroup of $\widetilde{H}_V(E_V)$ that is contained in the inverse image of $\Omega$ under the continuous homomorphism ${\widetilde{H}}_V(K_V)\rightarrow G(K_V)$ induced by $\pi_V:= \prod_{v\in V}\pi_v$.   Since the closure of $\widetilde{\Gamma}$ is an open subgroup of $\widetilde{H}_V(E_V)$, we see that $\widetilde{\Gamma} \cap({\widetilde\Omega} \cap \prod_{i = 1}^r {\widetilde{\mathcal{U}}}_i) \neq \varnothing$. Let $\widetilde{\gamma}$ be an element of this intersection, and let $\gamma\,(\in \Omega \cap \prod_{i=1}^r \mathcal{U}_i)$ be the corresponding element of $\Gamma$. We note that as the subsemigroup $\Gamma'$ generates $\Gamma$, and the closure of the latter in $G(K_V)$ is a compact subgroup, the closure of $\Gamma'$ in $G(K_V)$ is a subgroup and so it contains $\Gamma$.  Now since $\Omega \cap \prod_{i=1}^r \mathcal{U}_i$ is an open neighborhood of $\gamma\,(\in \Gamma)$ in $G(K_V)$, $\Gamma'\cap(\Omega \cap \prod_{i=1}^r \mathcal{U}_i) \neq\varnothing$. Let $\gamma' = (\gamma'_1,\ldots , \gamma'_r)$, with $\gamma'_i\in \mathcal{U}_i$,  be an element of this intersection. This element is clearly of infinite order; we will now show that it is generic.    Let $\mathscr{S} = Z_G(\gamma')^{\circ}$; this is a maximal $K$-torus of $G$. Let ${\mathscr{S}}_{i} = Z_G(\gamma'_i)^{\circ}$. Then $\mathscr{S}_{i}$ is conjugate to $\mathcal{S}_{i}$ by an element of $G(K_{v_i})$, and moreover, ${\mathscr{S}}_{K_{v_i}} = \mathscr{S}_{i}$.
\vskip1mm

Since $K' \subset E_{v_i}$, $K/K'$ is purely inseparable, $\mathscr{S}_{K_{v_i}}= \mathscr{S}_i$ is conjugate to $\mathcal{S}_i$ by an element of $G(K_{v_i})$ and $\pi_{v_i}(({\widetilde{\mathcal{T}}}_i)_{K_{v_i}})={\mathcal{S}}_i$, it follows from (\ref{E:YYY1})  by applying $\pi_{v_i}$ that
\begin{equation}\label{E:YYY2}
\theta_{{\mathscr{S}}_{K_{v_i}}}(\Ga(K_{v_i}^{\mathrm{sep}}/K_{v_i})) \cap \iota_{S_{i},\, {\mathscr{S}}_{K_{v_i}}}([{{w}}_i]) \neq \varnothing.
\end{equation}
Thus, the image $\theta_{\mathscr{S}}(\Ga(K^{\mathrm{sep}}/K))\,( \subset \mathrm{Aut}\: \Phi(G , \mathscr{S}))$ intersects every conjugacy class of $W(G ,{\mathscr{S}})$, and therefore it contains $W(G , \mathscr{S})$.  So, $\gamma'$ is generic, as required.
\end{proof}

\vskip1mm

In applying the preceding proposition, condition (0) is easy to achieve while conditions (1) and (2) require more work. The subtlety of condition (1) is that while it is easy to construct valuations $v$ such that $G$ is split over $K_{v}$, this may not imply automatically that $H_v$ is $E_v$-split. More precisely, given a $K$-isogeny $\pi \colon H \to G$ of connected absolutely almost simple algebraic groups over a field $K$ of positive characteristic, $H$ need not be $K$-split when $G$ is unless $\pi$ is a central isogeny.\footnote{To construct an example, let $q$ be a ``non-degenerate'' quadratic form of defect 1 on a $(2n+1)$-dimensional vector space V  over a field $k$ of characteristic $2$. Then the induced bilinear form on  $V/\mathrm{Rad}(q)$ is a non-degenerate alternating form in $2n$ variables which is invariant under $\mathrm{SO}(q)$. Thus we get the isogeny $\mathrm{SO}(q)\to \mathrm{Sp}(2n)$. Now, over a locally compact field $k$, the form  $q$ can be chosen to be of Witt index $n-1$, so $\mathrm{SO}(q)$ is not $k$-split, but $\mathrm{Sp}(2n)$ is $k$-split.

Note that if the absolute root system of $G$ is simply-laced then $\pi$ is a central isogeny.} We note that over nondiscrete locally compact fields all groups of type $\textsf{F}_4$ and $\textsf{G}_2$ are split, so in our situation this problem can arise only for isogenies between groups of types $\textsf{B}_n$ and $\textsf{C}_n$ over fields of characteristic two. However treating just this case does not appear to be simpler then treating the general case, which is what we are going to do. We begin with two simple lemmas.
\begin{lemma}\label{L:Vin}
{\rm (cf.\,Vinberg \cite[Lemmas 2 and 3]{Vin})} Let $\Delta \subset M_n(K)$ be an absolutely irreducible multiplicative semi-group, and let $E$ be a subfield of $K$ such that $\mathrm{tr}\: \delta \in E$ for all $\delta \in \Delta$. Then the characteristic polynomial of every $\delta \in \Delta$ has coefficients in $E$.
\end{lemma}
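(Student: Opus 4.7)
The plan is to identify the $E$-linear span $V$ of $\Delta$ inside $M_n(K)$ as an $E$-form of $M_n(K)$ --- a central simple $E$-algebra of degree $n$ whose base change $V\otimes_E K$ is canonically $M_n(K)$ --- and then to read off the conclusion from the fact that the reduced characteristic polynomial of an element of a central simple algebra lies in the base field.

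First I would observe that, since $\Delta$ is a semi-group, the $E$-subspace $V$ is closed under multiplication, and by $E$-linearity the ordinary matrix trace takes values in $E$ on all of $V$. Burnside's theorem applied to the absolutely irreducible semi-group $\Delta$ furnishes $\delta_1,\ldots,\delta_{n^2}\in\Delta$ forming a $K$-basis of $M_n(K)$. Since the trace pairing $(a,b)\mapsto\mathrm{tr}(ab)$ on $M_n(K)$ is non-degenerate, the $K$-linear map
\[
\Phi\colon M_n(K)\longrightarrow K^{n^2},\qquad a\longmapsto\bigl(\mathrm{tr}(a\delta_i)\bigr)_{1\le i\le n^2},
\]
is an isomorphism. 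For any $a=v+eI$ with $v\in V$ and $e\in E$ one has $a\delta_i=v\delta_i+e\delta_i$, where $v\delta_i$ is an $E$-linear combination of products in $\Delta$ (again in $\Delta$ by the semi-group property) and $e\delta_i$ is an $E$-multiple of $\delta_i\in\Delta$; hence $\mathrm{tr}(a\delta_i)\in E$ for each $i$, so $\Phi$ sends $V+E\cdot I$ into $E^{n^2}$. Injectivity of $\Phi$ gives $\dim_E(V+E\cdot I)\le n^2$, while the $E$-linearly independent $\delta_1,\ldots,\delta_{n^2}\in V$ force $\dim_E V\ge n^2$. Thus $\dim_E V=n^2$ and $I\in V$, so $V$ is a unital $E$-subalgebra of $M_n(K)$ of dimension $n^2$.

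Next, the multiplication map
\[
\mu\colon V\otimes_E K\longrightarrow M_n(K),\qquad v\otimes k\longmapsto kv,
\]
is a surjective $K$-algebra homomorphism (surjectivity because $K\cdot V\supseteq K\cdot\Delta=M_n(K)$) between $K$-vector spaces of the same dimension $n^2$, hence an isomorphism of $K$-algebras. This exhibits $V$ as an $E$-form of $M_n(K)$, i.e.\ a central simple $E$-algebra of degree $n$ split by $K/E$. By the standard theory of reduced invariants of central simple algebras, every $\delta\in V$ possesses a reduced characteristic polynomial $\mathrm{Prd}_\delta(x)\in E[x]$ of degree $n$ whose image in $K[x]$ under the splitting isomorphism $\mu$ is the ordinary characteristic polynomial of $\delta$ as a matrix in $M_n(K)$. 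Specializing to $\delta\in\Delta\subset V$ gives the claim.

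The main obstacle, and the reason the conclusion does not follow from the naive ``the $\mathrm{tr}(\delta^k)$ all lie in $E$, so invoke Newton's identities'' route, is that Newton's identities require division by $k!$ and therefore fail to recover the elementary symmetric functions of the eigenvalues in positive characteristic; even the stronger information that the characteristic polynomial of left-multiplication by $\delta$ on $V$ (which equals $\chi_\delta(x)^n$) lies in $E[x]$ is insufficient to conclude $\chi_\delta(x)\in E[x]$ when $E$ is not perfect. The trace-pairing step sidesteps this entirely by upgrading $V$ from a trace-invariant $E$-subspace to a genuine central simple $E$-form of $M_n(K)$, after which the reduced characteristic polynomial supplies the desired descent in a uniform, characteristic-free manner.
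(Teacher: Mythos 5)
Your proof is correct and follows essentially the same route as the paper: identify the $E$-span of $\Delta$ as a central simple $E$-algebra of degree $n$ split by $K$, using Burnside to get a $K$-basis in $\Delta$ and the non-degeneracy of the trace pairing to control dimensions, and then invoke rationality of the reduced characteristic polynomial. Your treatment is slightly more careful in making the unitality of the span explicit (the observation that $I \in V$), but the structure and key ideas match the paper's argument.
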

(In characteristic zero this, of course, immediately follows from Newton's formulas.)
\begin{proof}
Let $A$ be the $E$-span of $\Delta$; clearly, $A$ is an $E$-algebra. We will first show that $A$ is an $E$-form of $M_n(K)$, i.e. $A \otimes_E K \simeq M_n(K)$. Indeed, since $\Delta$ is absolutely irreducible, by Burnside's theorem, we can pick $\delta_1, \ldots , \delta_{n^2} \in \Delta$ that are linearly independent over $K$. Set $B = \sum_{i = 1}^{n^2} E\delta_i$. Clearly, the map
$$
\tau \colon M_n(K) \to K^{n^2}, \ \ a \mapsto (\mathrm{tr}(a\delta_1), \ldots , \mathrm{tr}(a\delta_{n^2})),
$$
is an isomorphism of $K$-vector spaces. Since $E$ contains the traces of all elements of $\Delta$, we obtain that $\tau(\Delta) \subset E^{n^2}$ and the matrix of the trace form in the basis $\delta_1, \ldots , \delta_{n^2}$ has entries in $E$. It follows that $\Delta \subset B$, and therefore $A = B$; in particular, $\dim_E A = n^2$. Then the natural homomorphism $A \otimes_E K \to M_n(K)$ is clearly an isomorphism, implying that $A$ is a central simple $E$-algebra. So, the characteristic polynomial of $\delta \in \Delta \subset A$ can be viewed as its reduced polynomial, and therefore has coefficients in $E$.
\end{proof}

\vskip2mm

To formulate the next lemma, we need to introduce one additional technical notion. Let $\gamma \in G(K)$ be a regular semisimple element, and $T = Z_G(\gamma)^{\circ}$ be the corresponding maximal torus. We say that $\gamma$ is \emph{super-regular} if the values $a(\gamma)$, for $a \in \Phi(G , T)$, are all distinct. We note that the set of super-regular elements is Zariski-open.
\begin{lemma}\label{L:split}
Let $H$ be an absolutely almost simple algebraic group over a field $E\,(\subset K)$, and let $\varphi \colon H_K \to G \,(\subset \mathrm{GL}_n)$ be an isogeny. Let $\gamma \in H(E)$ be a semisimple element such that $\varphi(\gamma)$ is super-regular and has eigenvalues in $E$. Then $\gamma$ is regular and the corresponding torus $T = Z_H(\gamma)^{\circ}$ is $E$-split; in particular, $H$ splits over $E$.
\end{lemma}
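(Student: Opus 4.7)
The plan is to proceed in three moves: (a) show that $\gamma$ is regular, so that $T := Z_H(\gamma)^{\circ}$ is a maximal $E$-torus of $H$; (b) show that $\alpha'(\gamma) \in E$ for every root $\alpha' \in \Phi(H, T)$; and (c) use super-regularity of $\varphi(\gamma)$ together with length-preservation by $\mathrm{Aut}(\Phi(H, T))$ to force the Galois action on $X(T)$ to be trivial. For (a), the isogeny $\varphi$ is finite, and it carries $Z_H(\gamma)^{\circ}$ with finite kernel into $Z_G(\varphi(\gamma))^{\circ}$, which is a maximal torus of $G$ because $\varphi(\gamma)$ is regular; a dimension count gives $\dim Z_H(\gamma)^{\circ} \le \mathrm{rank}\,G = \mathrm{rank}\,H$, so $\gamma$ is regular and $T$ is a maximal torus of $H$, defined over $E$ since $\gamma \in H(E)$.

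For step (b), I would use that the eigenvalues of $\varphi(\gamma)$ on $K^n$ are the values of the weights of $T_G := \varphi(T)_K$ on $\varphi(\gamma)$, and these lie in $E$ by hypothesis. Since $\mathfrak{g} \subset \mathfrak{gl}_n$ is $T_G$-stable, every root $\alpha \in \Phi(G, T_G)$ appears as a difference of two such weights, and hence $\alpha(\varphi(\gamma))$ is a ratio of eigenvalues and lies in $E$. The comorphism of an isogeny in characteristic $p$ sends each $\alpha \in \Phi(G, T_G)$ to $p^{n_\alpha}\alpha'$ for a uniquely determined $\alpha' \in \Phi(H, T)$ and an integer $n_\alpha \ge 0$, setting up a bijection between the two root systems. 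Thus $\alpha'(\gamma)^{p^{n_\alpha}} = (\varphi^{*}\alpha)(\gamma) \in E$, so $\alpha'(\gamma) \in E^{1/p^{\infty}}$. On the other hand $\alpha'$ is defined over the (separable) splitting field of $T$, and $\gamma \in T(E)$, so $\alpha'(\gamma) \in E^{\mathrm{sep}}$. Because $E^{\mathrm{sep}} \cap E^{1/p^{\infty}} = E$ inside $\overline{E}$, we conclude $\alpha'(\gamma) \in E$.

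For step (c), fix $\sigma \in \mathrm{Gal}(E^{\mathrm{sep}}/E)$; then $\sigma(\alpha')(\gamma) = \sigma(\alpha'(\gamma)) = \alpha'(\gamma)$. The action of $\sigma$ on $\Phi(H, T)$ factors through $\mathrm{Aut}(\Phi(H, T))$, which for the irreducible reduced root system of an absolutely almost simple group preserves root length. If $\sigma(\alpha') = \beta' \ne \alpha'$ for some $\alpha'$, then $\alpha'$ and $\beta'$ have the same length in $\Phi(H, T)$, so the corresponding $\alpha, \beta \in \Phi(G, T_G)$ under the isogeny bijection are of a single length too, and hence $n_\alpha = n_\beta$. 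Raising the equality $\alpha'(\gamma) = \beta'(\gamma)$ to the $p^{n_\alpha}$-th power would then give
$$\alpha(\varphi(\gamma)) = \alpha'(\gamma)^{p^{n_\alpha}} = \beta'(\gamma)^{p^{n_\beta}} = \beta(\varphi(\gamma))$$
with $\alpha \ne \beta$, contradicting super-regularity of $\varphi(\gamma)$. Therefore $\sigma$ fixes every root, and so the root lattice $Q \subset X(T)$; since $Q$ has finite index in the torsion-free group $X(T)$, $\sigma$ must act trivially on all of $X(T)$. This shows $T$ is $E$-split, and being a maximal torus, makes $H$ split over $E$.

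The hard part will be the bookkeeping around the isogeny comorphism in characteristic $p$, particularly for the exceptional isogenies in characteristics $2$ and $3$ (such as $B_n \leftrightarrow C_n$), where the exponents $n_\alpha$ genuinely depend on whether $\alpha$ is long or short. The argument survives only because length is preserved on both sides: by the structure of the isogeny comorphism on the $G$-side, and by $\mathrm{Aut}(\Phi)$ on the Galois side.
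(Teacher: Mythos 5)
Your proof is correct and follows essentially the same route as the paper, built around Chevalley's description of the comorphism of an isogeny ($\varphi^*(a) = p^{d(a)}\psi(a)$ with $\psi$ a Weyl-equivariant bijection and $d$ depending only on root length), together with the observation that $E^{\mathrm{sep}}\cap E^{1/p^\infty}=E$. The two small points where you deviate: for regularity of $\gamma$ you use a dimension count through $\varphi$, whereas the paper simply reads it off from $\varphi^*(a)(\gamma)=a(\varphi(\gamma))\neq 1$; and to kill the Galois action you treat all roots, showing $\sigma$ fixes each one and then passing from the root lattice to $X(T)$ by torsion-freeness, whereas the paper works only with $\Phi'_{\mathrm{short}}$ (where super-regularity directly gives pairwise distinct values $b(\gamma)$) and then uses that the short roots span $X(T)\otimes\Q$. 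Both variants buy the same conclusion, and your length bookkeeping in step (c) — using Weyl-equivariance of $\psi$ to infer $n_\alpha=n_\beta$ from $\alpha',\beta'$ having the same length — is exactly the point the paper hides in the parenthetical remark about $d(a)$ being constant on roots of a given length.
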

\begin{proof}
Let $T$ be a maximal $E$-torus of $H$ containing $\gamma$, and let $S= \varphi(T_K)$. We let $\Phi = \Phi(G , S)$ and $\Phi' = \Phi(H , T)$ denote the corresponding root systems. Set $p = 1$ if $\mathrm{char}\: E =  0$, and $p = \mathrm{char}\: E$ otherwise. Chevalley \cite[p.\,5]{Cheval} proves that there exists a bijection $\psi \colon \Phi \to \Phi'$ such that
\begin{equation}\label{E:YYY03}
\varphi^*(a) = p^{d(a)} \psi(a) \ \ \text{for all} \ \  a\in \Phi,
\end{equation}
where $d(a)$ is an integer $\geqslant 0$. Since
$$
\varphi^*(a)(\gamma) = a(\varphi(\gamma))  \ \ \text{for any} \ \ a \in \Phi,
$$
and $\varphi(\gamma)$ is regular, it follows from (\ref{E:YYY03}) that, first, for all $b \in \Phi'$, $b(\gamma) \neq 1$, and hence $\gamma$ is regular.
Moreover, since $d(a)$ is the same integer for all roots $a$ of a given length (which follows from the fact that the Weyl group acts transitively on the roots of the same length), we see that the values $b(\gamma)$, for $b \in \Phi'_{\mathrm{short}}$, are all distinct (we set $\Phi'_{\mathrm{short}} = \Phi'$ if all roots have the same length). Second, $b(\gamma) \in E^{1/p^{\infty}}$. At the same time, $b(\gamma)$ lies in a separable closure $E^{\mathrm{sep}}$ of $E$, so in fact $b(\gamma) \in E$ for all $b \in \Phi'$. Then for any $\sigma \in \mathscr{G} := \Ga(E^{\mathrm{sep}}/E)$ we have
$$
(\sigma(b))(\gamma) = \sigma(b(\sigma^{-1}(\gamma))) = b(\gamma).
$$
It follows that $\sigma(b) = b$ for all $b \in \Phi'_{\mathrm{short}}$ and all $\sigma \in \mathscr{G}$. Since  $\Phi'_{\mathrm{short}}$ span ${\rm{X}}(T)$, we obtain that $\mathscr{G}$ acts on ${\rm{X}}(T)$ trivially, i.e., $T$ is $E$-split.
\end{proof}

\vskip2mm

We will use the above two lemmas in the proof of Theorem 2 to verify condition (1) in Proposition \ref{P:Exist1}. We will now address condition (2) in this proposition.
\begin{prop}\label{P:minimal}
Let $V = \{v_1, \ldots , v_r\}$ be a finite set of discrete valuations of $K$ with locally compact completions, and for each $v \in V$ let $(E_v, H_v, \varphi_v)$ be a minimal quasi-model of $(K_v, G, \Gamma)$. As above, let $K_V = \prod_{v \in V} K_v$, $E_V = \prod_{v \in V} E_v$, $H_V = \prod_{v \in V} H_v$, and $\varphi_V = \prod_{v \in V} \varphi_v$. If the fields $E_v$ are pairwise non-isomorphic as topological fields then
$(E_V, H_V, \varphi_V)$ is a minimal quasi-model of $(K_V, G, \Gamma)$.
\end{prop}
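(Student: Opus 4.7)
\medskip

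\noindent \textbf{Proof plan.} I would verify two properties: first, that $(E_V, H_V, \varphi_V)$ is a quasi-model of $(K_V, G, \Gamma)$, and second, that it is minimal. The former is direct and componentwise: the factor fields of $E_V = \prod_{i=1}^{r} E_{v_i}$ are precisely the $E_{v_i}$, so the conditions on fibers of $H_V$, on $\varphi_V$ being an isogeny with non-vanishing derivative on each fiber, and on $K_V$ being of finite type over $E_V$ all reduce to the corresponding conditions for the individual $(E_{v_i}, H_{v_i}, \varphi_{v_i})$; and the diagonally embedded $\Gamma$ lies in $\varphi_V(H_V(E_V)) = \prod_i \varphi_{v_i}(H_{v_i}(E_{v_i}))$.

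For minimality, I would take an arbitrary quasi-model $(E', H', \varphi')$ of $(E_V, H_V, \Gamma)$ (with $\Gamma$ identified with its image in $H_V(E_V)$ via $\varphi_V$) and decompose $E' = \prod_{j=1}^{s} F_j$ into its factor fields, with primitive idempotents $e_j$. Since $E' \hookrightarrow E_V$ is unital, the images of the $e_j$ form a complete orthogonal system of idempotents in $E_V$, corresponding to a partition $\{1, \ldots, r\} = \bigsqcup_j I_j$ such that each $F_j$ sits as a closed subring of $\prod_{i \in I_j} E_{v_i}$. Writing $H' = \bigsqcup_j H'_j$ with $H'_j$ an $F_j$-group scheme, and letting $p_i \colon E_V \to E_{v_i}$ denote the projection, restricting $\varphi'$ to the $v_i$-component for $i \in I_j$ yields an isogeny $\varphi'_{j,i} \colon H'_j \times_{F_j} E_{v_i} \to H_{v_i}$. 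A routine check shows that $(p_i(F_j), H'_j, \varphi'_{j,i})$ is then a quasi-model of $(E_{v_i}, H_{v_i}, \Gamma)$: the algebraic conditions transfer by restriction, the finite-type condition for $E_{v_i}$ over $p_i(F_j)$ comes from projecting the corresponding condition for $E_V$ over $E'$ to the $v_i$-coordinate, and the inclusion $\Gamma \subset \varphi'_{j,i}(H'_j(p_i(F_j)))$ comes from projecting $\Gamma \subset \varphi'(H'(E'))$.

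Applying the minimality of each $(E_{v_i}, H_{v_i}, \varphi_{v_i})$ then forces $p_i(F_j) = E_{v_i}$ as closed subrings and $\varphi'_{j,i}$ to be an isomorphism. If $|I_j| > 1$, picking distinct $i, i' \in I_j$ yields continuous bijective ring homomorphisms $p_i|_{F_j} \colon F_j \to E_{v_i}$ and $p_{i'}|_{F_j} \colon F_j \to E_{v_{i'}}$ between $\sigma$-compact locally compact fields; by the open mapping theorem for locally compact $\sigma$-compact groups, both are topological isomorphisms, so composing them produces a topological isomorphism $E_{v_i} \cong E_{v_{i'}}$, contradicting the pairwise non-isomorphism hypothesis. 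Hence $|I_j| = 1$ for every $j$, forcing $s = r$ and, after relabeling, $F_i = E_{v_i}$, giving $E' = E_V$ and $\varphi' = \prod_i \varphi'_{i,i}$ an isomorphism. The main technical obstacle is the topological verification that each $p_i(F_j)$ is a closed subring of $E_{v_i}$ (needed for minimality to apply), which rests on standard local-field considerations; once this is secured, the idempotent partition combinatorics together with the pairwise non-isomorphism hypothesis yield the conclusion essentially formally.
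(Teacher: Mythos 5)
Your proof is correct and follows essentially the same line as the paper: decompose the subring $E'$ of any competing quasi-model into factor fields via idempotents, project to each coordinate of $E_V$, invoke minimality of each $(E_{v_i}, H_{v_i}, \Gamma)$ to force equality, and derive a contradiction with pairwise non-isomorphism if any block has size at least two. The only difference is cosmetic: you flag the closedness of $p_i(F_j)$ and the open-mapping step explicitly, which the paper leaves implicit; both follow from standard facts about finite-dimensional topological vector spaces over complete nondiscrete fields.
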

\begin{proof}
%
%
Let $(E, H, \varphi)$ be a quasi-model of  $(E_V, H_V, \Gamma)$, where $\Gamma$ is identified with its lift via $\varphi_V$. We need to show that $E = E_V$ and $\varphi$ is an isomorphism. Write $E = \prod_{i = 1}^d E_{v_i}$ and $H = \prod_{i = 1}^d H_i$, where $E_i$ is a local field and $H_i$ is a connected  absolutely simple adjoint $E_i$-group. It is enough to show that $d = r$. Indeed, then, by analyzing idempotents, we see that after a possible reindexing  of the $E_i$'s we may assume that $E_i \subset E_{v_i}$. In this case, for each $i \in \{1, \ldots , r\}$, the triple $(E_i, H_i, \varphi_i)$, where $\varphi_i$ is the restriction of $\varphi$, is a quasi-model of  $(E_{v_i}, H_{v_i}, \Gamma)$. So, the minimality of the latter implies that $E_i = E_{v_i}$ and $\varphi_i$ is an isomorphism, hence the required result.

Now, if $d < r$ then some $E_{i_0}$ has nontrivial projections to $E_{v_i}$ and $E_{v_j}$ for some $i , j \in \{1, \ldots , r \}$, $i \neq j$. So, $(E_{i_0}, H_{i_0}, \Gamma)$ is a model of both $(E_{v_i}, H_{v_i}, \Gamma)$ and $(E_{v_j}, H_{v_j}, \Gamma)$. Since these models are minimal, we have
$$
E_{v_i} = E_{i_0} = E_{v_j},
$$
contradicting our assumption.
\end{proof}

The final preparatory step for the proof of Theorem 2 provides a construction of valuations with the required properties.
\begin{lemma}\label{L:Embed}
Let $K$ be a finitely generated field, $F$ an infinite subfield of $K$, and $R \subset K$ be a finitely generated subring. Then there exists a subfield $K' \subset K$ containing $F$ such that the extension $K/K'$ is purely inseparable and for any $r \geqslant 1$ one can find $r$ discrete valuations $v_1, \ldots , v_r$ of $K$ such that

\vskip2mm

\noindent {\rm (1)} \parbox[t]{15cm}{for each $i = 1, \ldots , r$, the completion $K_{v_i}$ is locally compact, the ring $R$ is contained in the valuation ring $\mathcal{O}(K_{v_i})$, and the completions of $F$ and $K'$ with respect to the restrictions of $v_i$ (i.e. the closures of $F$ and $K'$ in $K_{v_i}$) coincide;}
\vskip1mm

\noindent {\rm (2)} \parbox[t]{15cm}{for $i \neq j$, the residue fields of $K_{v_i}$ and $K_{v_j}$ have different sizes.}
\end{lemma}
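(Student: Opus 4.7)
The plan is to reduce this lemma to the constructions already established in Propositions~\ref{P:1} and~\ref{P:2}, but applied to $F$ (or an auxiliary finite extension of $F$) rather than $K$, and then to transfer the resulting valuations from $F$ up to $K$ by a Hensel/Krasner argument that accounts for the purely inseparable part $K/K'$.

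\textbf{Choice of $K'$.} In characteristic zero, simply take $K'=K$; every purely inseparable extension is trivial and condition (1) reduces to the density of $F$ in $K_{v_i}$. In characteristic $p>0$, the plan is to pick a separable transcendence basis $t_1,\ldots,t_d$ of $F/\F_p$ (which exists because $\F_p$ is perfect and $F$ has finite transcendence degree), extend it to a transcendence basis $t_1,\ldots,t_n$ of $K/\F_p$, set $k=\F_p(t_1,\ldots,t_n)$, and take $K'$ to be the separable algebraic closure of $k$ inside $K$. Then $K/K'$ is purely inseparable of bounded height by construction, and since $F/\F_p(t_1,\ldots,t_d)$ is separable, so is $F\cdot k/k$, giving the inclusion $F\subset F\cdot k\subset K'$.

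\textbf{The valuations.} I would realize each $v_i$ as the pullback of the valuation on a locally compact field $L_i$ under an embedding $K\hookrightarrow L_i$ that extends a prescribed dense embedding of $F$ into a closed subfield $\mathcal{E}_i\subset L_i$ which also contains the image of $K'$. In characteristic zero, Proposition~\ref{P:1} applied to $K$ with $L=K$ and trivial conjugacy class yields infinitely many embeddings $\iota_p\colon K\hookrightarrow\Q_p$ with $\iota_p(R)\subset\Z_p$; density of $\Q\subset F$ in $\Q_p$ then gives (1) and distinct primes give (2). In characteristic $p>0$, I would fix a primitive element $\alpha$ for $K'/F(t_{d+1},\ldots,t_n)$ with separable minimal polynomial $g\in F[t_{d+1},\ldots,t_n][x]$. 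Since infinite finitely generated fields are Hilbertian, Hilbert's Irreducibility Theorem furnishes $a^0_j\in F$ such that $g_0(x):=g(x;a^0)\in F[x]$ remains irreducible and separable; set $F':=F(\alpha_0)$ for a root $\alpha_0$. Applying Proposition~\ref{P:2} to $F$ with $L$ the Galois closure of $F'/F$ and the trivial conjugacy class produces infinitely many discrete valuations of $F$ with locally compact completion at which $F'$ is completely split; by Remark~4.2 one may pick $r$ of these, $v_F^{(1)},\ldots,v_F^{(r)}$, with pairwise distinct residue-field sizes, and set $\mathcal{E}_i:=F_{v_F^{(i)}}$. For each $i$, perturb $a^0_j$ to algebraically independent $a^{(i)}_j\in a^0_j+\mathfrak{m}_{\mathcal{E}_i}$ and invoke Hensel's lemma to lift the image of $\alpha_0$ to a root $\tilde\alpha^{(i)}$ of $g(x;a^{(i)})$ in $\mathcal{E}_i$, yielding an embedding $\iota_i\colon K'\hookrightarrow\mathcal{E}_i$ over $F$. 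Finally, extend $\iota_i$ uniquely to $K\hookrightarrow L_i:=\mathcal{E}_i^{1/p^m}$ (a finite, purely inseparable, hence locally compact extension of $\mathcal{E}_i$), and let $v_i$ be the pullback of the valuation on $L_i$.

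\textbf{Verification and main obstacle.} The completion $K_{v_i}$ is a closed subfield of the locally compact $L_i$, hence locally compact; the closures of $F$ and $K'$ in $K_{v_i}$ both equal $\mathcal{E}_i$ by construction, giving (1); the residue field of $K_{v_i}$ coincides with that of $\mathcal{E}_i$ (finite fields are perfect, so purely inseparable extensions of residue fields are trivial), giving (2); and $R\subset\mathcal{O}(K_{v_i})$ is enforced by excluding the finitely many places at which a chosen generator of $R$ has negative valuation. The main obstacle I anticipate is the simultaneous realization of several density-type constraints on the valuations of $F$---trivial Frobenius in $\mathrm{Gal}(\tilde F'/F)$, pairwise distinct residue-field sizes, and $R$-integrality---each of which cuts out a positive-density subset of places; the nonemptiness of their intersection and the explicit Hensel bookkeeping will follow by a careful rerun of the argument used in the proof of Proposition~\ref{P:2}.
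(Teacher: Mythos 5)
Your plan is structurally close to the paper's own proof: both construct $K'$ as a separable closure of a rational function subfield so that $K/K'$ is purely inseparable, both use Hilbert irreducibility to specialize the transcendence parameters not belonging to $F$, both use a Chebotarev-type input to choose the places, and both lift the specialization along Hensel's lemma to embed $K'$ (and then uniquely $K$) into a locally compact field. The main architectural difference is the \emph{level} at which Chebotarev is applied: the paper descends all the way to a rational function field $k=\mathbb F_p(s_0)$ in one variable (a global field) and invokes classical Chebotarev there, whereas you stop at $F$ and invoke Proposition~\ref{P:2} as a black box. The paper's choice pays off in two concrete ways, and your version has a genuine gap at exactly those two places.

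First, condition (2). You claim that Remark~4.2 lets you select $r$ valuations of $F$ with pairwise distinct residue-field sizes. Remark~4.2 only asserts that Proposition~\ref{P:2} produces infinitely many \emph{inequivalent} valuations, which does not imply distinct residue-field sizes; in the proof of Proposition~\ref{P:2}, the places of $F$ are built from places of $\mathbb F_p(t_0)$, and there are many irreducible polynomials in $\mathbb F_p[t_0]$ of the same degree, so infinitely many of the resulting valuations can very well share the same residue cardinality. The paper sidesteps this entirely: by working over $k=\mathbb F_p(s_0)$, it selects irreducible polynomials in $\mathbb F_p[s_0]$ of \emph{pairwise distinct degrees} (Chebotarev has enough room within each prescribed degree range), so the residue cardinalities $p^{\deg}$ are automatically distinct, and these survive unchanged up to $K_{v_i}$ because a purely inseparable extension does not change the (perfect, finite) residue field. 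If you insist on invoking Proposition~\ref{P:2} applied to $F$, you must re-open its proof and impose the distinct-degree condition there, at which point you are essentially re-deriving the paper's argument from scratch rather than citing the proposition.

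Second, $R$-integrality. You write that $R\subset\mathcal{O}(K_{v_i})$ is ``enforced by excluding the finitely many places at which a chosen generator of $R$ has negative valuation.'' But $R$ is a finitely generated subring of $K$, not of $F$, and the valuation you build on $K$ depends not only on the place $v_F^{(i)}$ of $F$ but also on the choice of perturbed parameters $a_j^{(i)}$ used in the Hensel lift; a generator of $R$ is a rational expression in $F$, $t_{d+1},\dots,t_n$, and $\alpha$, and whether its image lands in $\mathcal{O}(L_i)$ is not governed by a finite exclusion of places of $F$ alone. The paper handles this cleanly by first choosing $h\in \mathbb F_p[s_0,\dots,s_a,t_1,\dots,t_b]$ with $R$ integral over the localization $C_h$, and then arranging (via Chebotarev and the perturbation step) that $h$ specializes to a unit; integrality then forces $\iota_j(R)\subset \mathcal{O}(\mathcal{K}_j)$. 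Your outline needs this localization-and-integrality device; without it, the $R$-containment is not justified. You do flag both of these in your closing paragraph as ``density-type constraints'' to be handled by ``rerunning'' Proposition~\ref{P:2}, but distinct residue cardinality is not a density constraint, and there is no density theorem available directly on $F$ — this is precisely why the paper descends to a global field before invoking Chebotarev.
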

\begin{proof}
We only need to consider the case where $K$ has characteristic $p > 0$. Pick a separable transcendence basis $s_0, \ldots , s_a$ of $F$ over the prime subfield $\mathbb{F}_p$ ($a \geqslant 0$ since $F$ is infinite), and let $t_1, \ldots , t_b$ be any transcendence basis of $K/F$. We then let $K'$ denote the separable closure of $F(t_1, \ldots , t_b)$ in $K$. Then $K'$ is a finite separable extension of $L = \mathbb{F}_p(s_0, \ldots , s_a, t_1, \ldots , t_b)$, and $K/K'$ is a finite purely inseparable extension. Since $R$ is finitely generated, we can find a nonzero $h \in C := \mathbb{F}_p[s_0, \ldots , s_a, t_1, \ldots , t_b]$ such that all elements of $R$ are integral over $C_h: = C[1/h]$.
\vskip1mm

Let $\alpha \in K'$ be a primitive element over $L$. We may assume without loss of generality that the minimal polynomial of $\alpha$ is of the form
$$
f(x) = x^n + p_{n-1} x^{n-1} + \cdots + p_0 \ \ \text{with} \ \ p_i \in C.
$$
Set $A = \mathbb{F}_p[s_0]$ and $k = \mathbb{F}_p(s_0)$, and then think of the $p_i$'s as elements of $C = A[s_1, \ldots , s_a, t_1, \ldots , t_b]$. Let $q = q(s_1, \ldots , t_b) \in A[s_1, \ldots , t_b]$ be the discriminant of $f$; note that $q \neq 0$ as $f$ is separable. We then pick $s^0_1, \ldots , t^0_b \in A$ so that $q(s^0_1, \ldots , t^0_b) \neq 0$ and $h(s_0, s^0_1, \ldots , t^0_b) \neq 0$, and let
$$
f_0(x) = x^n + p_{n-1}(s^0_1, \ldots , t^0_b)x^{n-1} + \cdots + p_0(s^0_1, \ldots , t^0_b) \in A[x].
$$
By our construction, $f_0(x)$ is a separable polynomial. It follows from Chebotarev's Density Theorem that one can find discrete valuations $v^0_1, \ldots v^0_r$ of $k$ corresponding to the irreducible polynomials in $A$ of pairwise distinct degrees such that for each $j \in \{ 1, \ldots , r \}$ the residue polynomial
$\overline{f_0(x)}^{(v^0_j)}$ over the residue field $\kappa_{v^0_j}$ is separable and splits into linear factors, and the residue $\overline{h(s_0, s^0_1, \ldots , t^0_b)}^{(v^0_j)} \neq 0$ in $\kappa_{v^0_j}$.

Let us show that for each $j = 1, \ldots , r$, there exists an embedding $$\iota_j \colon K \hookrightarrow \overline{k_{v^0_j}} =: \mathcal{K}_j \ \ \text{(algebraic closure of} \:k_{v^0_j})$$
extending the standard embedding of $k$ such that $\iota_j(K') \subset k_{v^0_j}$ and $\iota_j(R)$ is contained in the valuation ring $\mathcal{O}(\mathcal{K}_j)$. We let $\mathfrak{p}_j$ denote the valuation ideal in $k_{v^0_j}$. Then one can find elements $\widetilde{s}_1, \ldots , \widetilde{s}_a, \widetilde{t}_1, \ldots , \widetilde{t}_b \in k_{v^0_j}$ that are {\it algebraically independent over} $k$ and congruent respectively to $s^0_1, \ldots , s^0_a, t^0_1, \ldots , t^0_b$ modulo $\mathfrak{p}_j$. This enables us to construct an embedding of $L = k(s_1, \ldots , s_a, t_1, \ldots , t_b)$ into $k_{v^0_j}$ sending $s_1, \ldots , s_a, t_1,\ldots , t_b$ to $\widetilde{s}_1, \ldots , \widetilde{s}_a, \widetilde{t}_1,\ldots,  \widetilde{t}_b$. Next, we observe that  the polynomial
$$
\widetilde{f}(x) := x^n + p_{n-1}(\widetilde{s}_1, \ldots , \widetilde{s}_a, \widetilde{t}_1,\ldots , \widetilde{t}_b)x^{n-1} + \cdots + p_0(\widetilde{s}_1, \ldots ,\widetilde{s}_a, \widetilde{t}_1, \ldots , \widetilde{t}_b)
$$
has a root in $k_{v^0_j}$. Indeed, the residue $\overline{\widetilde{f}(x)}^{(v^0_j)}$ coincides with $\overline{f^0(x)}^{(v^0_j)}$, hence is a product of distinct linear factors over $\kappa_{v^0_j}$. So, the fact that $\widetilde{f}(x)$ has a root in $k_{v^0_j}$ follows from Hensel's Lemma, and in turn implies that the above embedding $L \hookrightarrow k_{v^0_j}$ extends to an embedding $K' \hookrightarrow k_{v^0_j}$. Now, for the required embedding $\iota_j$ we take the unique extension of the latter to $K$. We only need to show that $\iota_j(R) \subset \mathcal{O}(\mathcal{K}_j)$. According to our construction, we have the inclusion $\iota_j(C) \subset \mathcal{O}(k_{v^0_j}) \subset \mathcal{O}(\mathcal{K}_j)$. Furthermore, $\iota_j(h) = h(s_0, \widetilde{s}_2, \ldots , \widetilde{t}_b)$ is a unit in $\mathcal{O}(k_{v^0_j})$, so $\iota_j(C_h) \subset \mathcal{O}(\mathcal{K}_j)$. Since every element of $R$ is integral over $C_h$, the inclusion $\iota_j(R) \subset \mathcal{O}(\mathcal{K}_j)$ follows.
\vskip1mm

Now, let $v_j$ is the pullback to $K$ (via $\iota_j$) of the standard valuation on $\mathcal{K}_j$. Since $\iota_j(K') \subset k_{v^0_j}$ and $K/K'$ is finite, the completion $K_{v_j}$ is locally compact. All other properties in (1) immediately follow from our construction. Furthermore, by our construction, for $j \neq j$, the local fields $k_{v^0_i}$ and $k_{v^0_j}$ have the residue fields of different sizes. Since $K_{v_i}$ and $K_{v_j}$ are purely inseparable extensions of these fields while the residue fields are perfect, (2) follows.
\end{proof}

{\it Proof of Theorem 2.} Let $\pi \colon G \to \overline{G}$ be a central $K$-isogeny onto the corresponding adjoint group. It is easy to see that if $\gamma' \in \Gamma'$ is such that $\pi(\gamma')$ is a regular semi-simple element of infinite order that is generic over $K$ then $\gamma'$ possesses all these properties as well. Thus, we may assume from the beginning that $G$ is adjoint. Next, as we have seen at the beginning of this section, we may assume that $\Gamma'$ is finitely generated. Fixing a faithful $K$-representation $G \hookrightarrow \mathrm{GL}_n$, we can find a finitely generated ring $R$ in $K$ so that $\Gamma' \subset \mathrm{GL}_n(R)$. Let $\mathfrak{g} = L(G)$ be the Lie algebra of $G$. Any nontrivial $K$-isogeny $\varphi \colon H \to G$, with $H$ connected and adjoint, is purely inseparable and the image of the differential $d\varphi$ is either zero or contains the unique irreducible ${\mathrm{Ad}}\:G$-submodule $\mathfrak{m}$ of $\mathfrak{g}$. Let $\rho \colon G \to \mathrm{GL}(\mathfrak{m})$ denote the corresponding representation. Let $F$ be the subfield of $K$ generated by the traces $\mathrm{tr}\: \rho(\gamma)$, $\gamma\in\Gamma$; clearly, $F$ is infinite. Pick a super-regular $\gamma_0 \in \Gamma$; then  $\rho(\gamma_0)$  is super-regular in $\rho(G)$. Let $\chi(t)$ be the characteristic polynomial of $\rho(\gamma_0)$ which by Lemma \ref{L:Vin} has coefficients in $F$. Write $\chi(t) = (t - 1)^a f(t)$ where $f(t) \in F[t]$ is such that $f(1) \neq 0$. Since $\gamma_0$ is super-regular in $G$, the polynomial $f(t)$ does not have multiple roots. Expanding $K$, we may assume that $f$ splits over $K$ into linear factors. Then, since $f$ is separable, for any subfield $K' \subset K$ containing $F$ and such that $K/K'$ is purely inseparable, the polynomial $f$ splits into linear factors already over $K'$.

Now, using Lemma \ref{L:Embed}, we find a subfield $K' \subset K$ containing $F$ such that $K/K'$ is purely inseparable and discrete valuations $v_1, \ldots , v_r$ (where $r$ is the number of nontrivial conjugacy classes in the Weyl group of $G$) of $K$ satisfying conditions (1) and (2) therein. Set $V = \{v_1, \ldots , v_r\}$. Then for any $v \in V$, the completion $K_v$ is locally compact by construction and the closure of $\Gamma$ in $G(K_v)$ is compact due to the inclusions $\Gamma \subset \mathrm{GL}_n(R)$ and $R \subset \mathcal{O}(K_v)$, verifying condition (0) of Proposition \ref{P:Exist1}. Let $(H_v, E_v, \varphi_v)$ be a minimal quasi-model of $(G, K_v, \Gamma)$. Since the representation $\rho \circ \varphi_v$ is contained in the adjoint representation of $H_v$, we obtain from Proposition 3.10 of  \cite{Pink-CS} that $E_v \,(\subset K_v)$ contains $F$. Since $F$ and $K'$ have the same closure in $K_v$ and $f$ splits over $K'$ into linear factors, we conclude that all eigenvalues of $\rho(\gamma_0)$ lie in $E_v$. On the other hand, by the definition of a quasi-model, there exists $\gamma \in H_v(E_v)$ such that $\varphi_v(\gamma) = \gamma_0$. Applying Lemma \ref{L:split} to the isogeny $\rho \circ \varphi_v \colon H_v \to \rho(G)$, we obtain that $H_v$ is $E_v$-split, which verifies condition (1) of Proposition \ref{P:Exist1}. Finally, as we have seen, $E_v$ contains $K'$, and therefore the extension $K_v/E_v$ is purely inseparable. So, since the fields $K_{v_j}$ for $j= 1, \ldots , r$ have finite residue fields of pairwise different sizes, the same is true for the fields $E_{v_j}$, making these fields pairwise non-isomorphic. Applying Proposition \ref{P:minimal}, we see that $(E_V, H_V, \varphi_V)$ is a minimal model of  $(K_V, G, \Gamma)$, verifying condition (2) of Proposition \ref{P:Exist1}. Now, the assertion of Theorem 2 on the existence of generic elements immediately follows from Proposition \ref{P:Exist1}. \hfill $\Box$

\bigskip

\noindent {\small {\bf Acknowledgements.} Both authors were supported by NSF through grants DMS-1401380 and DMS-1301800. The second-named author was also supported by the Humboldt and the Simons Foundations. Part of the paper was written in the summer of 2016 when he visited the University of Bielefeld whose hospitality is thankfully acknowledged.}

\bigskip

\bibliographystyle{amsplain}

\begin{thebibliography}{100}

\bibitem{ANT} {\it Algebraic Number Theory,}  edited by J.W.S.~Cassels and A.~Fr\"ohich, 2nd edition, London Mathematical Society, 2010.

\bibitem{Cheval} C.~Chevalley, {\it Les isog\'enies}, S\'eminaire Claude Chevalley, tome 2 (1956-1958), exp. n$^{\circ}$ 18, p. 1-10.

\bibitem{Jan} G.J.~Janusz, {\it Algebraic Number Fields,} 2nd edition, Graduate Studies in Mathematics 7, AMS, 1996.

\bibitem{Lam} T.Y.~Lam, {\it A First Course in Noncommutative Rings}, 2nd edition, GTM 131, Springer, 2001.

\bibitem{Lang} S.~Lang, {\it Algebra,} Revised 3rd Edition, GTM 211, Springer, 2002.

\bibitem{Pink-CS} R.~Pink, {\it Compact subgroups of linear algebraic groups}, J.~Algebra {\bf 206}(1998), 438-504.

\bibitem{Pink-SA} R.~Pink, {\it Strong approximation for Zariski dense subgroups over
arbitrary global fields}, Comment. Math. Helv. {\bf 75}(2000), 608-643.


\bibitem{PlR} V.P.~Platonov, A.S.~Rapinchuk, {\it Algebraic Groups and Number Theory,} Academic Press, 1993.

\bibitem{PR-generic} G.~Prasad, A.S.~Rapinchuk, {\it Existence of irreducible $\R$-regular elements in Zariski-dense subgroups,} Math. Res. Lett. {\bf 10}(2003), 21-32.

\bibitem{PR-IHES} G.~Prasad, A.S.~Rapinchuk, {\it Weakly commensurable arithmetic groups and isospectral locally symmetric spaces,} Publ. math. IHES {\bf 109}(2009), 113-184.


\bibitem{PR-MSRI} G.~Prasad, A.S.~Rapinchuk, {\it Generic elements in Zariski-dense subgroups and isospectral locally symmetric spaces,} Thin Groups and Superstrong Approximation, 211-252, Math. Sci. Res. Inst. Publ. 61, Cambridge Univ. Press, Cambridge, 2014.


\bibitem{Se-Lie} J-P.~Serre, {\it Lie Algebras and Lie Groups,} LNM 1500, Springer, 1992.

\bibitem{Se-Inv} J-P.~Serre, {\it Topics in Galois Theory,} Jones and Barlett Publishers, Boston-London, 1992.

\bibitem{Se-NX(p)} J-P.~Serre, {\it Lectures on $N_X(p)$,} Chapman \& Hall/CRC Research Notes in Mathematics, 11. CRC Press, Boca Raton, FL, 2012.


\bibitem{Schwartz} J.~Schwatrz, {\it Weak commensurability of Zariski-dense subgroups of algebraic groups defined over fields of positive characteristic}, PhD dissertation, University of Virginia, 2014.

\bibitem{Vin} E.B.~Vinberg, {\it Rings of definition of dense subgroups of semisimple linear groups}, Izv. Akad. Nauk SSSR. Ser. Math. {\bf 35}(1971), 45-55.

\bibitem{Weis} B.~Weisfeiler, {\it  Strong approximation for Zariski-dense subgroups of semisimple algebraic groups}, Ann. Math. {\bf 120}(1984), no. 2, 271-315.


\end{thebibliography}

\end{document}